\def\commentout#1{{}}
\def\C{{\mathbb C}}
\def\N{{\mathbb N}}
\def\bI{{\mathbf I}}
\def\bJ{{\mathbf J}}
\def\profin#1{{{#1}^{{\kern -0.15em}\wedge}}}
\def\Xhat{{\profin{X}}}
\def\Ihat{{\profin{I}}}
\def\Jhat{{\profin{J}}}
\def\gpr{{P}}
\newcommand\Aut{\operatorname{Aut}}
\newcommand\parto{\rightharpoonup}
\newcommand\id{{\operatorname{id}}}
\newcommand\pr{{\operatorname{pr}}}
\newcommand\lex{{\operatorname{lex}}}
\newcommand\lexprod{{\leftthreetimes}}
\newcommand\lcm{{\operatorname{lcm}}}
\newcommand\Sets{{\operatorname{Sets}}}
\newcommand\Iso{{\operatorname{Iso}}}
\newtheorem{theorem}{Theorem}[section]
\newtheorem{lemma}[theorem]{Lemma}
\newtheorem{proposition}[theorem]{Proposition}
\newtheorem{corollary}[theorem]{Corollary}
\theoremstyle{definition}
\newtheorem{definition}[theorem]{Definition}
\theoremstyle{remark}
\newtheorem{remark}[theorem]{Remark}
\numberwithin{equation}{section}
\begin{document}
\title[Wreath products and non Schurian association schemes]
{Wreath products and projective system of
non Schurian association schemes}

\author{Makoto Matsumoto}
\address{Mathematics Program \\ 
Graduate School of Advanced Science and Engineering\\
Hiroshima University, 739-8526 Japan}
\email{m-mat@math.sci.hiroshima-u.ac.jp}

\author{Kento Ogawa}
\address{Mathematics Program \\ 
Graduate School of Advanced Science and Engineering\\
Hiroshima University, 739-8526 Japan}
\email{knt-ogawa@hiroshima-u.ac.jp}


\keywords{Association Scheme, wreath product, 
non-Schurian association scheme, S-ring,
profinite association scheme, kernel scheme
}
\thanks{
The first author is partially supported by JSPS
Grants-in-Aid for Scientific Research
JP26310211 and JP18K03213.
The second author is partially supported by
JST SPRING, Grant Number JPMJSP2132.
}

\subjclass[2020]{
05E30 Association schemes, strongly regular graphs, 
20D60 Arithmetic and combinatorial problems,
20E18 Limits, profinite groups
}
\date{\today}

\begin{abstract}
A wreath product is a method to construct
an association scheme from two association schemes. 
We determine the automorphism group of a wreath product.
We show a known result that a wreath product
is Schurian if and only if both components are
Schurian, which yields large families of 
non-Schurian association schemes and 
non-Schurian $S$-rings.
We also study iterated wreath products. 
Kernel schemes by Martin and Stinson are shown
to be iterated wreath products of class-one
association schemes. 
The iterated wreath products give
examples of projective systems of non-Schurian association schemes,
with an explicit description of primitive idempotents.
\end{abstract}

\maketitle
\section{Introduction}
Association schemes are central objects in algebraic combinatorics,
with many interactions with other areas of mathematics.
The wreath product is a method to 
construct an association scheme from two association 
schemes. 
This notion appears
in the monograph \cite[P.45-47]{WEIS} by Weisfeiler,
in a more general context of coherent configurations
(in the terminology of cellular algebras).
Song\cite{SONG} gives a description 
in terms of association schemes and their adjacency matrices. 
Muzychuk\cite{MUZYCHUK-WEDGE} defines the wreath product
of association schemes
and its generalizations (he used the term ``wedge product'').
There is a construction 
of non-Schurian $S$-rings by using a generalized 
wreath product for $S$-rings, studied by
Evdokimov and Ponomarenko\cite{EV-NONSCHUR}.

In this paper, we 
construct projective systems of 
non-Schurian association schemes in 
Section~\ref{sec:profin}.
As preliminaries, we recall some basic facts
on wreath products in Section~\ref{sec:wreath}
with a self-contained proof.
In Section~\ref{sec:auto},
we shall study the automorphism group of 
wreath products and their Schurian
property.
A large part of the results
in Section~\ref{sec:const-non-sch} is covered by more
detailed discussions in the monograph by Chen and Ponomarenko
\cite[Sections~3.2 and 3.4]{CHEN-PON},
but we give proofs for self-containedness, e.g., on
the known fact that 
the wreath product is Schurian if and only if both components
are Schurian 
(\cite[Corollary~3.4.7]{CHEN-PON}).
This yields a large 
class of non-Schurian association
schemes and non-Schurian $S$-rings, from the known examples.

In Section~\ref{sec:profin},
we consider iterated wreath products.
The kernel scheme by
Martin-Stinson\cite{MARTIN-STINSON} is an example.
An infinite iterated wreath product gives a projective
system of association schemes,
namely, profinite association schemes in the sense
of \cite{MOO},
with an explicit description of their primitive idempotents.
In \cite{MOO}, only Schurian examples are given. 
One of our motivations is to give examples of projective 
systems of non-Schurian association schemes.
\section{Wreath products}
\label{sec:wreath}
\subsection{Category of association schemes}
Let us recall the notion of association schemes briefly.
See Bannai-Ito\cite{Bannai} and Delsarte\cite{DELSARTE} for details.
We summarize basic terminologies.
\begin{definition}
Let $X$ be a finite set. By $\#X$ we denote the cardinality of
$X$. Let $C(X)$ denote the vector space of 
mappings from $X$ to $\C$. 
By the multiplication of functions, 
$C(X)$ is a unital commutative ring.
The set $C(X\times X)$
is naturally identified with the set of complex 
square matrices of size $\#(X)$, and the matrix product
is given by $AB(x,z)=\sum_{y\in X}A(x,y)B(y,z)$. The Hadamard
product $\circ$ is given by the component-wise product, namely,
$(A\circ B)(x,z)=A(x,z)B(x,z)$. (Note that $\circ$ may denote
the composition of mappings, but no confusion would occur.)
\end{definition}
\begin{definition}\label{def:assoc-scheme}
Let $X$, $I$ be finite sets, and $R:X\times X \to I$ a surjection.
We call $(X,R,I)$ an association scheme, 
if the following properties (1), (2), and (3) are satisfied.
For each $i \in I$, 
$R^{-1}(i)$ may be regarded as a relation on $X$, denoted by $R_i$.
Let $A_i$ be the corresponding adjacency matrix
in $C(X\times X)$. 
The surjection $R$ induces 
an injection $C(I) \to C(X\times X)$.
Let $A_X\subset C(X\times X)$ be the image.
\begin{enumerate}
 \item There is an $i_0 \in I$ such that $A_{i_0}$
is an identity matrix.
 \item $A_X\subset C(X\times X)$ is closed under the matrix product.
 \item $A_X$ is closed under the transpose of $C(X\times X)$.
\end{enumerate}
The algebra $A_X$ with the two multiplications
(i.e.\ the Hadamard product and the matrix product) is called
the Bose-Mesner algebra of $(X,R,I)$.
The set of $A_i$ $(i\in I)$ is the set of primitive idempotents
with respect to Hadamard products, and each is 
called an Hadamard primitive idempotent (or an adjacency matrix).
We call $X$ the underlying set of the association scheme, and
$I$ the set of the relations. The number $\#I-1$ is called
the number of classes.
We may use the same notation
$i_0$ for distinct association schemes.
The number of $1$ in a row of $A_i$ is independent 
of the choice of the row, called 
the $i$-th valency and denoted by $k_i$. It is also 
the number of $1$ in each column of $A_i$.
If $A_X$ is commutative with respect to the matrix product, 
then $(X,R,I)$ is said to be commutative.
\end{definition}
In the following, we write simply an ``association scheme $X$''
for an association scheme $(X,R,I)$ by an abuse of language.
\begin{definition}
For a commutative association scheme $X$, 
it is known that $A_X$ with matrix product is
isomorphic to the direct product of $\#I$ copies of $\C$
as a ring. Elements corresponding to $(0,\ldots,0,1,0,\ldots,0)$
are called the primitive idempotents.
We denote by $J=J(X)$
the set of primitive idempotents. Let $E_j$ denote
the primitive idempotent corresponding to $j\in J$.
The notation $j_0$ is kept for $E_{j_0}=\frac{1}{\#X}\bJ$,
where $\bJ$ denotes the matrix whose components are all $1$.
We use the symbol $j_0$ for any commutative association schemes.
\end{definition}
We follow MacLane \cite{MACLANE} for the terminologies of the category theory,
in particular, isomorphisms, functors, projective systems, and limits.
The association schemes form a category, 
by the following (e.g.\ Hanaki\cite{HANAKI-CAT} and Zieschang\cite{ZIESCHANG}).
\begin{definition}\label{def:cat}
Let $(X,R,I)$ and $(X',R',I')$ be association schemes.
A morphism of association schemes from $X$ to $X'$ is 
a pair of functions $f:X \to X'$ and $\sigma:I \to I'$ 
such that the following diagram commutes:
\begin{equation}\label{eq:comm}
    \begin{aligned}
      \begin{tikzpicture}[auto]
      \node (a1) at (0,2) {$X\times X$};
      \node (a2) at (0,0) {$X'\times X'$};
      \node (b1) at (3,2) {$I$};
      \node (b2) at (3,0) {$I'$};
      \node (1) at (1.5,1) {$\circlearrowleft$};
      \draw[->](a1) to node[swap] {$f\times f$} (a2);
      \draw[->](a1) to node {} (b1);
      \draw[->](a2) to node {} (b2);
      \draw[->](b1) to node {$\sigma$} (b2);
      \end{tikzpicture}.
    \end{aligned}
\end{equation}

By the commutativity, $\sigma$ preserves $i_0$.
It is clear that the surjectivity of $f$ implies 
that of $\sigma$. The morphism $(f,\sigma)$ is said to be 
surjective if $f$ is surjective. 
\end{definition}
\subsection{Wreath products}
Although originally the wreath product is introduced
by Weisfeiler \cite[P.45]{WEIS},
this section follows Song\cite[\S4]{SONG}, since
his description is due to association schemes
and convenient for our purpose. 
The definition, a proof, and the eigenmatrix
of wreath products
are all given there. We recall these, 
partly because we use some different symbols
and notations which make the definition
and proof simpler, and partly because we use some part of the 
proof in the following arguments.
\begin{definition}
Let $(X,R_X,I_X)$ and $(Y,R_Y,I_Y)$ be association schemes.
Then, 
$$
R_X \times R_Y: 
(X \times Y) \times (X \times Y) \to I_X \times I_Y
$$
is an association scheme, called the direct product of
$X$ and $Y$ \cite[\S3]{SONG} (called the tensor product
in \cite{WEIS}). 
In fact, this is the direct product
in the category of association schemes.
\end{definition}
It is easy to see the following.
\begin{proposition}\label{prop:direct}
The adjacency matrices 
of the direct product are
$$
\{A_{i_X}\otimes A_{i_Y} \mid i_X \in I_X, i_Y \in I_Y\}.
$$
If both $X$ and $Y$ are commutative, 
then so is $X\times Y$, 
and the primitive idempotents are
$$
\{E_{j_X}\otimes E_{j_Y} \mid j_X \in J_X, j_Y \in J_Y\}.
$$
\end{proposition}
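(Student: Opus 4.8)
The plan is to reduce the entire statement to the mixed-product property of Kronecker products, $(A\otimes B)(C\otimes D)=(AC)\otimes(BD)$ for matrices of compatible sizes. First I would unwind the definition of the direct product to pin down its adjacency matrices. Under the canonical identification of $C((X\times Y)\times(X\times Y))$ with square matrices whose rows and columns are indexed by $X\times Y$, the relation $R_X\times R_Y$ sends $((x_1,y_1),(x_2,y_2))$ to $(R_X(x_1,x_2),R_Y(y_1,y_2))$. Hence the adjacency matrix of the relation indexed by $(i_X,i_Y)$ carries a $1$ in position $((x_1,y_1),(x_2,y_2))$ exactly when $A_{i_X}(x_1,x_2)=1$ and $A_{i_Y}(y_1,y_2)=1$, which is precisely the defining entry formula for $A_{i_X}\otimes A_{i_Y}$. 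This yields the first assertion with no computation beyond matching indices, and it also shows these matrices are linearly independent (their supports partition $(X\times Y)^2$), so $\dim A_{X\times Y}=\#I_X\cdot\#I_Y$.

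For commutativity, I would apply the mixed-product property to a product of two adjacency matrices of $X\times Y$, obtaining $(A_{i_X}\otimes A_{i_Y})(A_{i'_X}\otimes A_{i'_Y})=(A_{i_X}A_{i'_X})\otimes(A_{i_Y}A_{i'_Y})$. Since $A_X$ and $A_Y$ are commutative by hypothesis, the two tensor factors may be swapped independently, so the products commute on the spanning set $\{A_{i_X}\otimes A_{i_Y}\}$ and hence on all of $A_{X\times Y}$. This step must precede the discussion of primitive idempotents, as those are defined only in the commutative case.

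For the primitive idempotents I would verify that the candidates $E_{j_X}\otimes E_{j_Y}$ form a complete system of orthogonal idempotents. Expressing each $E_j$ as a $\C$-combination of the $A_i$ shows $E_{j_X}\otimes E_{j_Y}\in A_{X\times Y}$. The mixed-product property gives $(E_{j_X}\otimes E_{j_Y})(E_{j'_X}\otimes E_{j'_Y})=(E_{j_X}E_{j'_X})\otimes(E_{j_Y}E_{j'_Y})$, which equals $E_{j_X}\otimes E_{j_Y}$ when $(j_X,j_Y)=(j'_X,j'_Y)$ and vanishes otherwise by orthogonality in each factor; each candidate is nonzero, being a tensor of nonzero idempotents, and $\sum_{j_X,j_Y}E_{j_X}\otimes E_{j_Y}=\bigl(\sum_{j_X}E_{j_X}\bigr)\otimes\bigl(\sum_{j_Y}E_{j_Y}\bigr)$ is the identity.

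Finally I would close the argument by counting. For a commutative scheme the number of primitive idempotents equals $\#I$, so there are $\#J_X\cdot\#J_Y=\#I_X\cdot\#I_Y$ candidates, matching $\dim A_{X\times Y}$. In a commutative algebra isomorphic to $\C^{\#I_X\cdot\#I_Y}$, any complete system of that many nonzero mutually orthogonal idempotents summing to the identity must have singleton supports, hence must be exactly the set of primitive idempotents. The one point requiring care is this last step: the candidates a priori give only some decomposition of the identity into orthogonal idempotents, and it is precisely the cardinality match that forces each of them to be primitive.
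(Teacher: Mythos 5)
Your proof is correct; the paper itself gives no argument for this proposition (it is introduced with ``It is easy to see the following''), and your verification via the mixed-product property of Kronecker products, orthogonality of the candidate idempotents, and the dimension count forcing primitivity is exactly the standard argument the authors are leaving implicit. The one step you rightly flag as needing care --- that a complete orthogonal system of $\#I_X\cdot\#I_Y$ nonzero idempotents in an algebra isomorphic to $\C^{\#I_X\cdot\#I_Y}$ must consist of primitive ones --- is handled correctly.
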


The wreath product is defined as follows.
The non-standard symbol $\lexprod$ comes from 
the ``L'' of lexicographic.
\begin{definition}\label{def:lexI}
Define
$$
\lex: I_X \times I_Y \to (I_X\setminus \{i_0\})\coprod I_Y
$$ 
by
$$
\lex(i_X, i_Y)=
\begin{cases}
i_X & \mbox{ if } i_X \neq i_0, \\
i_Y & \mbox{ if } i_X = i_0.
\end{cases}
$$
We denote
$$
I_X \lexprod I_Y:=(I_X\setminus \{i_0\})\coprod I_Y.
$$
\end{definition}
\begin{definition}\label{def:functor}
For a mapping $\sigma:I_X \to I_X$ preserving $i_0\in I_X$
and a mapping $\tau:I_Y \to I_Y$ preserving $i_0\in I_Y$,
we define
$$
\sigma\lexprod \tau: I_X\lexprod I_Y \to I_X \lexprod I_Y
$$
by $\sigma$ on $I_X\setminus \{i_0\}$ 
and by $\tau$ on $I_Y$. (See Definition~\ref{def:cat} for
the preservation of $i_0$.)
We also define
$$
\pi_{I_X}:I_X \lexprod I_Y \to I_X, \quad i_X \mapsto i_X
\mbox{ and } i_Y \mapsto i_0.
$$
\end{definition}
\begin{remark}
This remark is for category-oriented readers,
and may be skipped since it is not essential in this paper.
Let $\Sets^!$ be the category of sets with a base point,
namely, an object is a set
with a base point and a morphism is a mapping preserving
the base points. In other words, for a fixed singleton
$\{!\}$, $\Sets^!$ is the category of 
morphisms $\{!\} \to S$
with $S$ being a set (a special case of comma categories).
We remark that $I_X \lexprod I_Y$ gives the coproduct functor
$\Sets^! \times \Sets^! \to \Sets^!$ 
(i.e. the pushout of $\{!\}\to I_X$ and $\{!\}\to I_Y$
in the category $\Sets$ of sets).
The first definition in 
Definition~\ref{def:functor}
comes from this functoriality. 
Remark that $\lex$ is NOT a natural transformation 
$$\lex: \times \Rightarrow \lexprod$$
where $\times:\Sets^! \times \Sets^! \to \Sets^!$ is the
direct product functor. On the other hand, there is a canonical
natural transformation 
$$\lexprod \Rightarrow \times$$
given by 
$$
I_X\lexprod I_Y \to I_X \times I_Y, \quad
i_x\in I_X \mapsto (i_x,i_0), \mbox{ and }
i_y\in I_Y \mapsto (i_0,i_y).
$$
Consequently, if
$$\pr_i:\Sets^! \times \Sets^! \to \Sets^!$$
is the projection functor to the $i$-th component $(i=1,2)$,
then there is a natural transformation $\lexprod \Rightarrow \pr_i$
for each $i=1,2$,
and $\pi_{I_X}$ is the evaluation of $\lexprod \Rightarrow \pr_1$
at $(I_X,I_Y)$.
\end{remark}
\begin{definition}\label{def:lex} (Wreath products)

Let $(X,R_X,I_X)$ and $(Y,R_Y,I_Y)$ be association schemes. 
Then the composition
$$
R_\lexprod:(X \times Y) \times (X \times Y) \to I_X \times I_Y 
\stackrel{\lex}{\to} I_X \lexprod I_Y
$$
is an association scheme, which is called the 
wreath product of $X$ and $Y$, and denoted by 
$$
X \lexprod Y.
$$
\end{definition}
\begin{remark}
The underlying set of $X\lexprod Y$ is $X\times Y$, 
whereas the set of relations is $I_X\lexprod I_Y$ with 
the cardinality $\#I_X +\#I_Y -1$. The wreath product
is a fusion of the direct product.
\end{remark}
We first thought that the name ``wreath product''
seems misleading, since this seems to have no relation with
the wreath product of groups (as Song \cite{SONG} remarked in 
his introduction). However,
there is a strong relation \cite[P.47, \S4.8]{WEIS}
via transitive permutation groups.
Still, we point out the similarity to 
the ``lexicographic ordering,''
since the above definition means that the relation
of two elements in $X \times Y$
is determined by first looking at the $X$-components,
and decided according to their relation 
when they are different, 
and otherwise by the relation at the $Y$-components.
This procedure is similar to the lexicographic ordering.
The next is a translation of \cite[Theorem~4.1]{SONG}.
\begin{theorem}\label{th:main} $ $
\begin{enumerate}
\item 
For association schemes $X$ and $Y$, 
their wreath product $X\lexprod Y$ is an association scheme.
\item 
Let $A_{i_X}$ $(i_X \in I_X)$ 
(and $A_{i_Y}$ $(i_Y\in I_Y)$, respectively) 
be the adjacency matrices of 
$X$ (those of $Y$, respectively). Then,
the adjacency matrices of $X \lexprod Y$ are 
as follows.
\[
\begin{array}{lcl}
\mbox{ Type front: }& A_{i_X}\otimes \bJ & \mbox{ if } i_X\in I_X, i_X \neq i_0 \\
\mbox{ Type rear: } & \bI \otimes A_{i_Y}& \mbox{ if } i_Y\in I_Y, \\
\end{array}
\]
where $\bI$ denotes the identity matrix (of size $\#X$)
and $\bJ$ denotes the matrix whose components are all $1$
(square of size $\#Y$).
\end{enumerate}
\end{theorem}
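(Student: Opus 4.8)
The plan is to read off the relation map of $X\lexprod Y$ explicitly and then verify the three axioms of Definition~\ref{def:assoc-scheme} by direct matrix computation. By definition $R_\lexprod=\lex\circ(R_X\times R_Y)$, so for $(x,y),(x',y')\in X\times Y$ the relation between them is $R_X(x,x')\in I_X\setminus\{i_0\}$ when $x\neq x'$ (equivalently $R_X(x,x')\neq i_0$), and is $R_Y(y,y')\in I_Y$ when $x=x'$. To establish (2) I would compute the $0$--$1$ matrix of each relation. For $i_X\in I_X$ with $i_X\neq i_0$, the entry at $((x,y),(x',y'))$ equals $1$ exactly when $R_X(x,x')=i_X$, a condition independent of $y,y'$; this is precisely $A_{i_X}\otimes\bJ$, the Type front matrix. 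For $i_Y\in I_Y$, the entry equals $1$ exactly when $x=x'$ and $R_Y(y,y')=i_Y$, which is $\bI\otimes A_{i_Y}$, the Type rear matrix. This gives part (2), and it shows that the candidate Bose--Mesner algebra $A_{X\lexprod Y}$ is spanned by these matrices.

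For part (1), axiom (1) is immediate: the identity relation is $i_0\in I_Y$, whose matrix is $\bI\otimes A_{i_0}=\bI\otimes\bI$, the identity of size $\#X\cdot\#Y$. Axiom (3) is equally routine: since $X$ and $Y$ are association schemes, $A_{i_X}^{\mathsf{T}}=A_{i_X'}$ and $A_{i_Y}^{\mathsf{T}}=A_{i_Y'}$ for suitable indices, and transpose fixes $i_0$, so $(A_{i_X}\otimes\bJ)^{\mathsf{T}}=A_{i_X'}\otimes\bJ$ remains Type front and $(\bI\otimes A_{i_Y})^{\mathsf{T}}=\bI\otimes A_{i_Y'}$ remains Type rear. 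The real content is axiom (2), closure under matrix product, which I would handle with the mixed-product rule $(A\otimes B)(C\otimes D)=(AC)\otimes(BD)$ together with the elementary identities $\bJ\bJ=\#Y\cdot\bJ$ and $\bJ A_{i_Y}=A_{i_Y}\bJ=k_{i_Y}\bJ$ (the last because the row and column sums of $A_{i_Y}$ both equal its valency).

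Splitting into the four cases, three of them land visibly back in the span: rear$\times$rear gives $\bI\otimes(A_{i_Y}A_{i'_Y})$, a combination of Type rear matrices via the structure constants of $Y$; and the mixed products front$\times$rear and rear$\times$front reduce, using $\bJ A_{i_Y}=k_{i_Y}\bJ$, to scalar multiples of a single Type front matrix. The one step needing care, and the main obstacle, is front$\times$front: here $(A_{i_X}\otimes\bJ)(A_{i'_X}\otimes\bJ)=\#Y\,(A_{i_X}A_{i'_X})\otimes\bJ$, and expanding $A_{i_X}A_{i'_X}$ by the structure constants of $X$ produces, besides Type front terms, a possible term proportional to $A_{i_0}\otimes\bJ=\bI\otimes\bJ$, which is \emph{not} one of the adjacency matrices. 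The observation that closes the argument is that $\bI\otimes\bJ=\sum_{i_Y\in I_Y}(\bI\otimes A_{i_Y})$, since $\sum_{i_Y}A_{i_Y}=\bJ$; thus this stray term is exactly the sum of all Type rear matrices and so lies in $A_{X\lexprod Y}$. With this, all four products stay in the span, axiom (2) holds, and both parts follow.
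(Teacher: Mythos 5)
Your proof is correct and follows essentially the same route as the paper: identify the adjacency matrices from the preimages under $\lex\circ(R_X\times R_Y)$, then verify the unit, transpose, and closure under the matrix product by the four-case analysis using $\bJ A_{i_Y}=k_{i_Y}\bJ$. If anything you are more careful than the paper on the front$\times$front case, where the possible $A_{i_0}\otimes\bJ=\bI\otimes\bJ$ term is not itself an adjacency matrix but must be rewritten as $\sum_{i_Y\in I_Y}\bI\otimes A_{i_Y}$ --- a point the paper's proof glosses over by asserting that a product of front-type matrices is a combination of front-type matrices.
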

\begin{proof}
Take an element $i_X \in I_X\setminus \{i_0\}\subset I_X \lexprod I_Y$. 
Its preimage by $\lex$ is $\{(i_X, i_Y) \mid i_Y \in I_Y\}$.
By definition, 
$$
\sum_{i_Y\in I_Y} A_{i_Y}= \bJ.
$$
By Proposition~\ref{prop:direct}, the inverse image
of $i_X \in I_X \setminus\{i_0\}\subset I_X \lexprod I_Y$ by $R_\lexprod$ corresponds to 
$$
A_{i_X} \otimes \bJ.
$$
Take an element $i_Y \in I_Y \subset I_X \lexprod I_Y$. 
Its inverse image by $R_\lexprod$ corresponds to 
$$
\bI \otimes A_{i_Y}.
$$
To check that these constitute an association scheme,
we only need to show that this set of matrices is
closed under the transpose and contains the unit, which are
obvious, and that
matrix products of these matrices
are linear combinations of these. A product of the front-type
matrices is a linear combination of those, since $X$ is an association
scheme. A product of the rear-type matrices is a linear combination
of those, since $Y$ is an association scheme.
For the mixed case, by $\bJ A_{i_Y}=k_{i_Y}\bJ$, 
the product is a scalar
multiple of a matrix of the front-type.
\end{proof}
Here we start our own observations.
\begin{corollary}\label{cor:non-P}
The wreath product $X \lexprod Y$ is not $P$-polynomial
if $\#X>1$ and $\#Y>1$. 
\end{corollary}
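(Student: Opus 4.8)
The plan is to reduce $P$-polynomiality to a generation statement and then show it fails. Recall that if $X\lexprod Y$ is $P$-polynomial with respect to an ordering $A_0=\bI\otimes\bI,A_1,\dots,A_d$, then every $A_i$ is a polynomial in $A_1$, so the single matrix $A_1$ generates the whole Bose-Mesner algebra under the matrix product. By Theorem~\ref{th:main} this algebra has dimension $\#I_X+\#I_Y-1$, and its non-identity adjacency matrices are exactly the front-type matrices $A_{i_X}\otimes\bJ$ ($i_X\neq i_0$) and the rear-type matrices $\bI\otimes A_{i_Y}$. It therefore suffices to prove that neither a front-type nor a rear-type matrix generates an algebra of dimension $\#I_X+\#I_Y-1$.

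For a rear-type matrix I would note that $(\bI\otimes A_{i_Y})^m=\bI\otimes A_{i_Y}^m$, so, since the Bose-Mesner algebra $A_Y$ of $Y$ is closed under products, every polynomial in $\bI\otimes A_{i_Y}$ lies in the algebra $\{\bI\otimes M\mid M\in A_Y\}$ of dimension $\#I_Y$. Because $\#X>1$ forces $\#I_X\geq 2$, this is strictly smaller than $\#I_X+\#I_Y-1$, so a rear-type matrix cannot generate (conceptually, its relation graph is the disconnected union of the fibres $\{x\}\times Y$). For a front-type matrix I would use $\bJ^2=(\#Y)\bJ$ to get $(A_{i_X}\otimes\bJ)^m=(\#Y)^{m-1}(A_{i_X}^m\otimes\bJ)$ for $m\geq1$; hence every polynomial in $A_{i_X}\otimes\bJ$ lies in $\C\,(\bI\otimes\bI)+\{M\otimes\bJ\mid M\in A_X\}$. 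Since $\#Y>1$ gives $\bI\otimes\bI\notin\{M\otimes\bJ\mid M\in A_X\}$ (a rank comparison), this space has dimension $1+\#I_X$.

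Comparing $1+\#I_X$ with the target $\#I_X+\#I_Y-1$ shows the front-type bound is strictly smaller precisely when $\#I_Y\geq 3$, i.e. when $Y$ has at least two classes; in that range neither type generates and the conclusion follows at once. The main obstacle is the boundary case $\#I_Y=2$, where this crude dimension count becomes an equality and no longer excludes generation. There I expect to need a finer, support-based argument: a rear-type relation $\bI\otimes A_{i_Y}$ has zero diagonal and is supported entirely inside the fibres, whereas every positive power of a front-type matrix is a scalar multiple of some $M\otimes\bJ$, all of whose $\#Y\times\#Y$ blocks are constant, and one must rule out reproducing the within-fibre relations from such blocks. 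I expect this block/support analysis, together with the symmetric treatment of the structure of $X$, to be the crux, as it is exactly the regime in which small wreath products come closest to being metric.
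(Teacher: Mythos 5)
Your two dimension counts are correct, and they are essentially the paper's own argument made precise: a rear-type generator stays inside $\bI\otimes A_Y$ (dimension $\#I_Y$, too small once $\#I_X\geq 2$), and a front-type generator stays inside $\C\,(\bI\otimes\bI)+A_X\otimes\bJ$ (dimension $1+\#I_X$, too small once $\#I_Y\geq 3$). The one place where you are more careful than the paper is exactly the constant term: the paper asserts that polynomials in a front-type matrix remain in the multiplicatively closed span $A_X\otimes\bJ$, silently dropping the summand $\C\,(\bI\otimes\bI)$ contributed by the unit, and on that basis concludes in all cases. You correctly refuse to do so, and you correctly identify $\#I_Y=2$ as the case the count does not settle.

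However, that boundary case is not merely harder: the statement is false there, so the ``finer, support-based argument'' you hope for cannot exist. Take $X$ and $Y$ both equal to the class-one scheme $H(1,2)$ on two points (Definition~\ref{def:H1}), so that $A_{i_X}=\bJ_X-\bI_X$ and $A_{i_X}^2=\bI_X$. Then
$$(A_{i_X}\otimes\bJ)^2=A_{i_X}^2\otimes\bJ^2=2\,(\bI\otimes\bJ)=2\,(\bI\otimes\bI)+2\,(\bI\otimes A_{i_Y}),$$
so $\bI\otimes A_{i_Y}=\frac{1}{2}(A_{i_X}\otimes\bJ)^2-\bI\otimes\bI$ is a polynomial of degree exactly two in the front-type matrix, and the ordering $\bI\otimes\bI,\ A_{i_X}\otimes\bJ,\ \bI\otimes A_{i_Y}$ realizes the $P$-polynomial property. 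This is precisely the loophole you spotted: when $A_{i_Y}=\bJ_Y-\bI_Y$, the rear-type matrix lies in $\C\,(\bI\otimes\bI)+A_X\otimes\bJ$. Concretely, $H(1,m)\lexprod H(1,n)$ is the scheme of the complete multipartite graph $K_{m\times n}$ (for $m=n=2$, the $4$-cycle), which is connected and strongly regular, hence distance-regular of diameter two. So what your computation actually proves is a corrected corollary: $X\lexprod Y$ is not $P$-polynomial whenever $\#X>1$ and $Y$ has at least two classes; when $\#I_Y=2$ the wreath product can be $P$-polynomial. Your closing instinct that this is the regime where wreath products ``come closest to being metric'' was right --- they are sometimes metric --- and in pinning down the gap you have located an error in the paper's corollary and its proof, not only an incompleteness in your own.
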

\begin{proof}
The wreath product is $P$-polynomial, if there is an
adjacency matrix that generates the Bose-Mesner
algebra as a ring over $\C$.
By $\#X>1$, we have an $i_X\neq i_0$. 
By $\#Y>1$, we have an $i_Y\neq i_0$.
The linear span of the front-type matrices 
is identical with the set $A_X\otimes \bJ$ and
is closed under the matrix multiplication.
The linear span of the rear-type matrices is 
identical with the set
$\bI\otimes A_Y$ and
is closed under the matrix multiplication.
Thus, a front-type adjacency matrix does not generate
a rear-type adjacency matrix $\bI\otimes A_{i_Y}$,
since $i_Y\neq i_0$ and hence $A_{i_Y}\neq \bJ$.
A rear-type adjacency matrix does not generate
a front-type adjacency matrix $A_{i_X}\otimes \bJ$
since $A_{i_X}\neq A_{i_0}=\bI$.
\end{proof}
\begin{corollary}
The wreath product of association schemes
is a commutative association scheme if and only if
both components are commutative. 
\end{corollary}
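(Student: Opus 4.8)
The plan is to use the characterization that an association scheme is commutative precisely when its Bose-Mesner algebra is commutative under the matrix product, which---since the adjacency matrices span that algebra---is equivalent to all adjacency matrices commuting pairwise. By Theorem~\ref{th:main}, the adjacency matrices of $X \lexprod Y$ are of two kinds: front-type $A_{i_X} \otimes \bJ$ (with $i_X \neq i_0$) and rear-type $\bI \otimes A_{i_Y}$. I would therefore verify commutativity separately on the three types of pairs, using throughout the basic identity $(A \otimes B)(C \otimes D) = (AC) \otimes (BD)$.

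First, for two front-type matrices, $(A_{i_X} \otimes \bJ)(A_{j_X} \otimes \bJ) = (A_{i_X} A_{j_X}) \otimes \bJ\bJ = (\#Y)\,(A_{i_X}A_{j_X}) \otimes \bJ$, so two front-type matrices commute if and only if $A_{i_X}$ and $A_{j_X}$ commute; ranging over all $i_X, j_X \neq i_0$ (and noting $A_{i_0} = \bI$ commutes with everything) this holds iff $X$ is commutative. Similarly, for two rear-type matrices $(\bI \otimes A_{i_Y})(\bI \otimes A_{j_Y}) = \bI \otimes (A_{i_Y}A_{j_Y})$, so these commute iff $A_{i_Y}$ and $A_{j_Y}$ commute, that is, iff $Y$ is commutative.

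The crucial observation is that mixed pairs always commute. Indeed, using $\bJ A_{i_Y} = A_{i_Y}\bJ = k_{i_Y}\bJ$ (both equal to the valency times the all-ones matrix, exactly the identity already exploited in the proof of Theorem~\ref{th:main}), one computes $(A_{i_X} \otimes \bJ)(\bI \otimes A_{i_Y}) = A_{i_X} \otimes (\bJ A_{i_Y}) = k_{i_Y}\,(A_{i_X} \otimes \bJ)$, and likewise $(\bI \otimes A_{i_Y})(A_{i_X} \otimes \bJ) = A_{i_X} \otimes (A_{i_Y}\bJ) = k_{i_Y}\,(A_{i_X} \otimes \bJ)$, so the two products coincide unconditionally. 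Combining the three cases, the full set of adjacency matrices of $X \lexprod Y$ commutes pairwise if and only if both $X$ and $Y$ are commutative, which is the desired equivalence. I do not anticipate any genuine obstacle: the only points requiring care are the $i_0$ cases (where a front-type matrix would degenerate to $\bI \otimes \bJ$ and must be excluded or treated separately) and the verification that the mixed products automatically agree because $\bJ$ absorbs each $A_{i_Y}$ into its valency on either side.
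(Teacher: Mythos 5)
Your proof is correct and follows the same route the paper intends: the paper's one-line proof simply appeals to the description of the adjacency matrices in Theorem~\ref{th:main}, and your three-case check (front--front, rear--rear, and the mixed case via $\bJ A_{i_Y}=A_{i_Y}\bJ=k_{i_Y}\bJ$) is exactly the computation being left implicit. No gaps.
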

\begin{proof}
This follows from the above description of the Hadamard primitive idempotents. 
\end{proof}

\begin{corollary}\label{cor:lex-J}
Suppose that both $X$ and $Y$ are commutative.
Let $E_{j_X}$ $(j_X \in J_X)$ 
(and $E_{j_Y}$ $(j_Y\in J_Y)$, respectively) 
be the primitive idempotents of 
$X$ (of $Y$, respectively). Then,
the primitive idempotents of $X \lexprod Y$ are
as follows.
\[
\begin{array}{lcl}
\mbox{ Type front: }& E_{j_X}\otimes \frac{1}{\#Y}\bJ & \mbox{ if } j_X\in J_X\\
\mbox{ Type rear: } & \bI \otimes E_{j_Y}& \mbox{ if } j_Y\in J_Y, j_Y \neq j_0. \\
\end{array}
\]
Thus, 
$$
J(X\lexprod Y)=J_X \coprod (J_Y\setminus \{j_0\}).
$$
\end{corollary}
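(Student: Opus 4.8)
The plan is to verify directly that the listed matrices form a complete system of pairwise orthogonal idempotents for the Bose-Mesner algebra $A_{X\lexprod Y}$, and then to conclude that they must be \emph{the} primitive idempotents by a dimension count. Since $X$ and $Y$ are commutative, so is $X\lexprod Y$ by the preceding corollary, so $A_{X\lexprod Y}$ is isomorphic to a product of $\#(I_X\lexprod I_Y)=\#I_X+\#I_Y-1$ copies of $\C$; hence there are exactly $\#I_X+\#I_Y-1=\#J_X+(\#J_Y-1)$ primitive idempotents, matching the number of candidates. Throughout I would use the tensor identity $(A\otimes B)(C\otimes D)=(AC)\otimes(BD)$, the relation $\bJ^2=\#Y\,\bJ$, the fact $E_{j_0}=\frac{1}{\#Y}\bJ$ in $Y$, and the defining relations $E_jE_{j'}=\delta_{jj'}E_j$ and $\sum_jE_j=\bI$ in each of $X$ and $Y$.

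First I would check membership in $A_{X\lexprod Y}$. Expanding $E_{j_X}=\sum_{i_X\in I_X}p_{i_X}A_{i_X}$ and using $A_{i_0}=\bI$, the front-type matrix $E_{j_X}\otimes\frac{1}{\#Y}\bJ$ is a linear combination of the $A_{i_X}\otimes\bJ$ with $i_X\neq i_0$ (front-type adjacency matrices) together with $\bI\otimes\bJ=\sum_{i_Y}(\bI\otimes A_{i_Y})$ (a sum of rear-type matrices, by Theorem~\ref{th:main}); the rear-type matrix $\bI\otimes E_{j_Y}$ is plainly a combination of the $\bI\otimes A_{i_Y}$. So all candidates lie in the algebra.

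Next I would verify the defining relations. Idempotency and the orthogonality of two front-type or of two rear-type matrices follow immediately from the tensor identity together with $\bJ^2=\#Y\,\bJ$ and $E_jE_{j'}=\delta_{jj'}E_j$. The crucial computation is the mixed term: $(E_{j_X}\otimes\frac{1}{\#Y}\bJ)(\bI\otimes E_{j_Y})=E_{j_X}\otimes\frac{1}{\#Y}\bJ E_{j_Y}$, and since $\bJ=\#Y\,E_{j_0}$ one gets $\bJ E_{j_Y}=\#Y\,E_{j_0}E_{j_Y}=0$ exactly because $j_Y\neq j_0$, so the product vanishes. This is precisely where the exclusion of $j_0$ on the rear side is forced. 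For the total sum I would compute $\sum_{j_X}E_{j_X}\otimes\frac{1}{\#Y}\bJ+\sum_{j_Y\neq j_0}\bI\otimes E_{j_Y}=\bI\otimes\frac{1}{\#Y}\bJ+\bI\otimes(\bI-E_{j_0})=\bI$, where the first and last terms cancel because $E_{j_0}=\frac{1}{\#Y}\bJ$.

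Finally, the candidates are nonzero, pairwise orthogonal idempotents summing to $\bI$, and there are exactly $\#I_X+\#I_Y-1=\dim A_{X\lexprod Y}$ of them. In a product of copies of $\C$, any family of that many nonzero orthogonal idempotents summing to the identity must be the full set of minimal (primitive) idempotents, so the list is exactly $J(X\lexprod Y)$, yielding the bijection $J(X\lexprod Y)=J_X\coprod(J_Y\setminus\{j_0\})$. The only genuine subtlety is this last step---deducing primitivity from completeness of the orthogonal system via the dimension count---together with the observation that the mixed orthogonality, and hence the whole statement, hinges on removing $j_0$ from the rear side.
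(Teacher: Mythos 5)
Your proposal is correct and follows essentially the same route as the paper's proof: verify the candidates lie in the Bose--Mesner algebra, check they are pairwise orthogonal nonzero idempotents, and conclude by comparing their number with $\dim A_{X\lexprod Y}=\#I_X+\#I_Y-1$. You simply spell out the details the paper leaves as ``easy to check'' (the mixed product vanishing because $j_Y\neq j_0$, and the sum equalling $\bI$), so no substantive difference.
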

\begin{proof}
One sees that these elements are linear combinations
of the adjacency matrices in Theorem~\ref{th:main}
because $X$ and $Y$ are commutative association schemes,
and hence in the Bose-Mesner algebra of $X\lexprod Y$.
It is easy to check that these are idempotents
and the product of any two distinct elements is zero, and hence
are linearly independent.
Since the number of these matrices is $\#X+\#Y-1$,
which is the cardinality of $I_X\lexprod I_Y$, these
idempotents span the Bose-Mesner algebra.
\end{proof}
\begin{corollary}
Let $X$ and $Y$ be commutative association schemes. 
Then, $X\lexprod Y$ is not $Q$-polynomial, 
if $\#X>1$ and $\#Y>1$.
\end{corollary}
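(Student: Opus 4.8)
The plan is to dualize the proof of Corollary~\ref{cor:non-P}: replace the adjacency matrices by the primitive idempotents of Corollary~\ref{cor:lex-J}, and replace the matrix product by the Hadamard product $\circ$. Recall that $X\lexprod Y$ is $Q$-polynomial precisely when some primitive idempotent generates its Bose--Mesner algebra as a ring under the Hadamard product, so the goal is to show that no single primitive idempotent does so. By Corollary~\ref{cor:lex-J} the primitive idempotents are the front type $E_{j_X}\otimes\frac{1}{\#Y}\bJ$ ($j_X\in J_X$) and the rear type $\bI\otimes E_{j_Y}$ ($j_Y\in J_Y\setminus\{j_0\}$), and the hypotheses $\#X>1$ and $\#Y>1$ guarantee a nontrivial idempotent of each type.

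The computational engine is the elementary identity $(A\otimes B)\circ(C\otimes D)=(A\circ C)\otimes(B\circ D)$ for the Hadamard product of Kronecker products. First I would record
\[
\Bigl(E_{j_X}\otimes\tfrac{1}{\#Y}\bJ\Bigr)\circ\Bigl(E_{j_X'}\otimes\tfrac{1}{\#Y}\bJ\Bigr)=\bigl(E_{j_X}\circ E_{j_X'}\bigr)\otimes\tfrac{1}{(\#Y)^2}\bJ ,
\]
and, since $A_X$ is closed under $\circ$, conclude that the front-type idempotents span the Hadamard-closed subspace $A_X\otimes\C\bJ$. Dually,
\[
\bigl(\bI\otimes E_{j_Y}\bigr)\circ\bigl(\bI\otimes E_{j_Y'}\bigr)=\bI\otimes\bigl(E_{j_Y}\circ E_{j_Y'}\bigr),
\]
which lies in the Hadamard-closed subspace $\bI\otimes A_Y$ containing every rear-type idempotent.

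The remainder runs parallel to Corollary~\ref{cor:non-P}. The Hadamard powers of a front-type idempotent all lie in $A_X\otimes\C\bJ$, which cannot contain a rear-type idempotent $\bI\otimes E_{j_Y}$ with $j_Y\neq j_0$, because $E_{j_Y}$ is not a scalar multiple of $\bJ$ (only $E_{j_0}=\frac{1}{\#Y}\bJ$ is). Symmetrically, the Hadamard powers of a rear-type idempotent all lie in $\bI\otimes A_Y$, which cannot contain a front-type idempotent $E_{j_X}\otimes\frac{1}{\#Y}\bJ$ with $j_X\neq j_0$, because $E_{j_X}$ is not a scalar multiple of $\bI$ (its rank equals its multiplicity, strictly smaller than $\#X$ when $\#X>1$). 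As $\#X>1$ and $\#Y>1$ furnish nontrivial idempotents of both types, no primitive idempotent generates the whole Bose--Mesner algebra, so $X\lexprod Y$ is not $Q$-polynomial.

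The step I expect to demand the most care is the bookkeeping around the Hadamard unit $\bJ$, which is the exact dual of the role played by the matrix unit $\bI$ in Corollary~\ref{cor:non-P}. One must check that, when a candidate generator is combined with the unit of the ambient algebra, the resulting subalgebra still lands in the claimed subspace and genuinely misses the idempotents of the opposite type. For a front-type generator this is immediate, since the all-ones matrix already lies in $A_X\otimes\C\bJ$; the delicate direction is the rear-type generator, where $\bJ$ sits outside $\bI\otimes A_Y$, and it is precisely the non-membership $E_{j_X}\otimes\frac{1}{\#Y}\bJ\notin\bI\otimes A_Y$ that forces the use of $\#X>1$, mirroring the use of $\#Y>1$ in the $P$-polynomial statement.
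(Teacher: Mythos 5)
Your plan --- dualize Corollary~\ref{cor:non-P} by trading adjacency matrices for primitive idempotents and the matrix product for the Hadamard product --- is precisely what the paper does (its proof of this corollary is the single sentence that the argument of Corollary~\ref{cor:non-P} applies mutatis mutandis). The front-type direction of your argument is sound: the unital Hadamard subalgebra generated by $E_{j_X}\otimes\frac{1}{\#Y}\bJ$ lies in $A_X\otimes\C\bJ$, whose dimension $\#I_X$ is smaller than $\#I_X+\#I_Y-1$ because $\#Y>1$. But the rear-type direction --- the very step you flag as delicate --- does not close. $Q$-polynomiality allows the polynomials $v_j^*$ to carry constant terms (the degree-zero case is $E_{j_0}=\frac{1}{\#X\#Y}\bJ$), so the unital Hadamard subalgebra generated by $\bI\otimes E_{j_Y}$ is a subspace of $\C\,\bJ_{X\times Y}+\bI\otimes A_Y$, not of $\bI\otimes A_Y$. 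Proving $E_{j_X}\otimes\frac{1}{\#Y}\bJ\notin\bI\otimes A_Y$ is therefore not enough; you need non-membership in $\C\bJ+\bI\otimes A_Y$, and that can fail. If $X$ is class one, its unique nontrivial idempotent is $E_{j_X}=\bI_X-\frac{1}{\#X}\bJ_X$, whence
\[
E_{j_X}\otimes\tfrac{1}{\#Y}\bJ_Y
=\tfrac{1}{\#Y}\,\bI_X\otimes\bJ_Y-\tfrac{1}{\#X\,\#Y}\,\bJ_{X\times Y}
\in \bI\otimes A_Y+\C\bJ .
\]
Indeed $H(1,2)\lexprod H(1,2)$ is the scheme of the quadrilateral ($K_{2,2}$ together with the antipodal matching), which is $Q$-polynomial (and $P$-polynomial) in the standard Bannai--Ito sense, so the step cannot be repaired without either excluding this case or reading ``generates as a ring'' non-unitally. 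The same unit issue sits in the paper's own proof of Corollary~\ref{cor:non-P}, in the front-type direction there, where $\bI_{X\times Y}\notin A_X\otimes\C\bJ$.

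What your computation does establish, via a dimension count, is that a front-type idempotent never generates, and that a rear-type idempotent generates a subalgebra of dimension at most $1+\#I_Y<\#I_X+\#I_Y-1$ whenever $\#I_X\geq 3$. So your proof becomes correct, and the corollary true, under the additional hypothesis that $X$ has at least two classes; as stated, both the proposal and the statement have a gap exactly at the bookkeeping of the Hadamard unit that you yourself identified.
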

\begin{proof} A similar proof to that of Corollary~\ref{cor:non-P}
applies to.
\end{proof}

\begin{corollary} 
We use $E_{j_0}=\frac{1}{\#Y}\bJ$.
The first eigenmatrix of $X\lexprod Y$ is given by
\begin{eqnarray*}
(A_{i_X}\otimes \bJ)(E_{j_X}\otimes E_{j_0})
&=&
P_{i_X}(j_X)\#Y (E_{j_X}\otimes E_{j_0}), \\
(\bI \otimes A_{i_Y})(E_{j_X}\otimes E_{j_0})
&=&
k_{i_Y}(E_{j_X}\otimes E_{j_0}), \\
(A_{i_X}\otimes \bJ)(\bI \otimes E_{j_Y})
&=&
0 (\bI \otimes E_{j_Y}), \\
(\bI \otimes A_{i_Y})(\bI \otimes E_{j_Y})
&=&
P_{i_Y}(j_Y)(\bI \otimes E_{j_Y}), 
\end{eqnarray*}
where $P_{i_X}(j_X)$ 
is the eigenvalue of $A_{i_X}$ for eigenvector $E_{j_X}$, and
$P_{i_Y}(j_Y)$ 
is the eigenvalue of $A_{i_Y}$ for eigenvector $E_{j_Y}$.
\end{corollary}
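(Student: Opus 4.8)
The plan is to read off all four equalities from the mixed-product property of the Kronecker product, namely $(A\otimes B)(C\otimes D)=(AC)\otimes(BD)$, once three elementary identities about the all-ones matrix $\bJ$ of size $\#Y$ are in hand. First I would record that $\bJ^2=\#Y\,\bJ$ (each entry of $\bJ^2$ is a sum of $\#Y$ ones), that $A_{i_Y}\bJ=k_{i_Y}\bJ$ (each row of $A_{i_Y}$ contains exactly $k_{i_Y}$ ones, so every entry of $A_{i_Y}\bJ$ equals $k_{i_Y}$), and that $\bJ E_{j_Y}=0$ for $j_Y\neq j_0$. The last identity uses $E_{j_0}=\frac{1}{\#Y}\bJ$, so that $\bJ=\#Y\,E_{j_0}$, together with the orthogonality $E_{j_0}E_{j_Y}=0$ of distinct primitive idempotents of the commutative scheme $Y$.

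With these in hand, each line becomes a one-step computation. For the first, $(A_{i_X}\otimes\bJ)(E_{j_X}\otimes E_{j_0})=(A_{i_X}E_{j_X})\otimes(\bJ E_{j_0})$; here $A_{i_X}E_{j_X}=P_{i_X}(j_X)E_{j_X}$ by the definition of the eigenvalue $P_{i_X}(j_X)$, and $\bJ E_{j_0}=\frac{1}{\#Y}\bJ^2=\bJ=\#Y\,E_{j_0}$, which produces the factor $P_{i_X}(j_X)\#Y$. For the second, $(\bI\otimes A_{i_Y})(E_{j_X}\otimes E_{j_0})=E_{j_X}\otimes(A_{i_Y}E_{j_0})$, and $A_{i_Y}E_{j_0}=\frac{1}{\#Y}A_{i_Y}\bJ=k_{i_Y}E_{j_0}$ by the valency identity. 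For the third, $(A_{i_X}\otimes\bJ)(\bI\otimes E_{j_Y})=A_{i_X}\otimes(\bJ E_{j_Y})=0$ because $\bJ E_{j_Y}=0$ when $j_Y\neq j_0$. For the fourth, $(\bI\otimes A_{i_Y})(\bI\otimes E_{j_Y})=\bI\otimes(A_{i_Y}E_{j_Y})=P_{i_Y}(j_Y)(\bI\otimes E_{j_Y})$, again directly from the definition of the eigenvalue.

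Since the matrices listed in Corollary~\ref{cor:lex-J} form a complete set of primitive idempotents of $X\lexprod Y$, these four equalities exhaust the action of every adjacency matrix on every primitive idempotent (letting $j_X$ range over $J_X$ and $j_Y$ over $J_Y\setminus\{j_0\}$), so they determine the first eigenmatrix completely. I do not expect a genuine obstacle; the only points requiring a moment of care are keeping the normalization $E_{j_0}=\frac{1}{\#Y}\bJ$ straight so that the factor $\#Y$ in the first line is correctly accounted for, and invoking orthogonality of primitive idempotents (rather than a direct matrix computation) to obtain the zero in the third line.
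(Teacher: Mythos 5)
The paper states this corollary without proof, treating it as an immediate computation, and your argument supplies exactly the intended verification: the mixed-product property $(A\otimes B)(C\otimes D)=(AC)\otimes(BD)$ combined with the identities $\bJ^2=\#Y\,\bJ$, $A_{i_Y}\bJ=k_{i_Y}\bJ$, and $\bJ E_{j_Y}=\#Y\,E_{j_0}E_{j_Y}=0$ for $j_Y\neq j_0$. All four lines check out, and your closing remark that Corollary~\ref{cor:lex-J} guarantees these cases exhaust the action on all primitive idempotents is the right way to see that the first eigenmatrix is thereby completely determined.
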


\begin{proposition}\label{prop:proj}
Let $X$ and $Y$ be association schemes.
The projection 
$$
p:X\times Y \to X
$$
of the underlying sets and the mapping
$$
\pi_{I_X}:I_X \lexprod I_Y \to I_X
$$
in Definition~\ref{def:functor} give
a surjective morphism of association schemes
$$
\pi:X \lexprod Y \to X.
$$
\end{proposition}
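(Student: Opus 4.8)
The plan is to verify directly that the pair $(p,\pi_{I_X})$ satisfies the defining condition of a morphism of association schemes from Definition~\ref{def:cat}, taking $f=p\colon X\times Y\to X$ and $\sigma=\pi_{I_X}\colon I_X\lexprod I_Y\to I_X$. Unwinding the commutative square~\eqref{eq:comm}, what must be shown is the equality of maps $(X\times Y)\times(X\times Y)\to I_X$
\[
\pi_{I_X}\circ R_\lexprod = R_X\circ(p\times p).
\]
Surjectivity of the morphism will then reduce to surjectivity of $p$, which is immediate once $Y$ is seen to be nonempty.

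The key observation I would isolate first is that $\pi_{I_X}\circ\lex$ equals the first projection $\pr\colon I_X\times I_Y\to I_X$. Indeed, by Definition~\ref{def:lexI} and Definition~\ref{def:functor}, for a pair $(i_X,i_Y)$: if $i_X\neq i_0$ then $\lex(i_X,i_Y)=i_X$ and $\pi_{I_X}(i_X)=i_X$; if $i_X=i_0$ then $\lex(i_X,i_Y)=i_Y\in I_Y$ and $\pi_{I_X}(i_Y)=i_0=i_X$. In both cases the composite returns $i_X$, so $\pi_{I_X}\circ\lex=\pr$. (This identity is precisely the feature for which $\lex$ and $\pi_{I_X}$ were defined, and it is compatible with the categorical picture in the remark following Definition~\ref{def:functor}.)

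With this in hand the verification is formal. By Definition~\ref{def:lex}, $R_\lexprod=\lex\circ(R_X\times R_Y)$, where $R_X\times R_Y$ is the relation map of the direct product, sending $\bigl((x,y),(x',y')\bigr)$ to $\bigl(R_X(x,x'),R_Y(y,y')\bigr)$. Therefore
\[
\pi_{I_X}\circ R_\lexprod
=\pi_{I_X}\circ\lex\circ(R_X\times R_Y)
=\pr\circ(R_X\times R_Y).
\]
Evaluating at $\bigl((x,y),(x',y')\bigr)$, the right-hand side is $\pr\bigl(R_X(x,x'),R_Y(y,y')\bigr)=R_X(x,x')=R_X\bigl(p(x,y),p(x',y')\bigr)$, which is exactly $R_X\circ(p\times p)$. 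This establishes the required commutativity, so $(p,\pi_{I_X})$ is a morphism of association schemes.

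Finally, every association scheme has a nonempty underlying set, since its structure map $R$ is a surjection onto a set containing $i_0$; in particular $Y\neq\emptyset$, and the projection $p$ of a nonempty product onto a factor is surjective. Hence $\pi$ is a surjective morphism. I do not expect a genuine obstacle here: the entire content lies in the bookkeeping of the case split defining $\lex$ and $\pi_{I_X}$, the cleanest packaging of which is the identity $\pi_{I_X}\circ\lex=\pr$; the only point requiring a word of care is the nonemptiness of $Y$ used for the surjectivity of $p$.
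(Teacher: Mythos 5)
Your proof is correct and is essentially the paper's argument: the paper simply says the proposition ``follows by a diagram chasing,'' and your verification (isolating the identity $\pi_{I_X}\circ\lex=\pr$ and composing with $R_X\times R_Y$) is exactly that chase carried out explicitly.
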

\begin{proof}
This follows by a diagram chasing.
\end{proof}

\section{Automorphisms and Schurian property}
\label{sec:auto}
\subsection{Automorphism group of a wreath product}
\begin{definition}\label{def:aut-sets}
For an association scheme $X$, we denote by $\Aut(X)$
the group of automorphisms in the category of
association schemes (Definition~\ref{def:cat}).
On the other hand, for any set $X$ equipped with any structure
(such as an association scheme or a group),
$$
\Aut_\Sets(X)
$$
denote the group of bijections from $X$ to $X$,
neglecting the structure of $X$.
\end{definition}

As stated before, the wreath product of association schemes is
studied and generalized by 
Muzychuk\cite{MUZYCHUK-WEDGE},
where in Proposition~3.2 the automorphism group of
generalized wreath products of thin-association schemes
is determined. 
Chen and Ponomarenko \cite[Theorem~3.4.6]{CHEN-PON}
determined the automorphism group of
the wreath product, but their definition 
of automorphism groups is the normal subgroup
$\Aut(X|I) < \Aut(X)$ 
in Lemma~\ref{lem:X-I} below, 
and thus different from ours.
\footnote{Our definition of $\Aut$ coincides with $\Iso$
in \cite[Definition~2.2.1]{CHEN-PON}, which Chen and Ponomarenko
say ``definitely more natural than $\Aut$, 
but here we follow a long tradition.''}
We give a description of the automorphism group 
of wreath products of association schemes.

\begin{theorem}\label{th:aut}
Let $X$ and $Y$ be association schemes,
and $p:X \times Y \to X$ be the projection.
Then, for any $(f,\sigma) \in \Aut(X\lexprod Y)$,
$f$ maps each fiber $Y_x:=p^{-1}(x)$ to another fiber.
Thus, $f$ induces an element $\pi(f)\in \Aut_\Sets(X)$.
This gives a surjective morphism 
of groups
$$
\Aut(X\lexprod Y) \stackrel{\gpr}{\to} \Aut(X)
$$ 
with a natural splitting morphism.
Thus, we have a group isomorphism
$$
\Aut(X\lexprod Y) \cong K \rtimes \Aut(X),
$$
where $K:=\ker P$, whose structure is
described in Proposition~\ref{prop:ker} below.
We have a natural embedding
of $\Aut(Y)$ into $K$ that acts trivially on $X$
and diagonally on each fiber $Y_x$.
The image of the embedding commutes with the image of the splitting.
Thus, $\Aut(X\lexprod Y)$
has a subgroup isomorphic to 
$\Aut(Y)\times \Aut(X)$.
\end{theorem}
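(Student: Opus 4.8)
The plan is to derive everything from a single intrinsic fact: the fibers of $p\colon X\times Y\to X$ are determined by the scheme structure of $X\lexprod Y$ alone, so any automorphism must permute them. I will detect the fibers through valencies. By Theorem~\ref{th:main}, a front-type relation $i_X\neq i_0$ has valency $k_{i_X}\cdot\#Y\ge\#Y$ in $X\lexprod Y$, whereas a rear-type relation $i_Y$ has valency $k_{i_Y}$; since the identity relation of $Y$ has valency $1$ and $\sum_{i_Y}k_{i_Y}=\#Y$, every rear-type relation satisfies $k_{i_Y}\le\#Y-1$ (the case $\#Y=1$ being trivial, as then $X\lexprod Y\cong X$). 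Thus the front- and rear-type valencies occupy disjoint ranges separated at $\#Y$. Since $\sigma$ is a bijection with $R_\lexprod(f(p),f(q))=\sigma(R_\lexprod(p,q))$, counting the $r$-neighbours of a point and passing to $f$-images gives $k_{\sigma(r)}=k_r$ for every $r$; hence $\sigma$ preserves the front/rear partition of $I_X\lexprod I_Y$. Two points lie in a common fiber exactly when their relation is rear-type, so $f$ carries each $Y_x$ into a single fiber, and being injective between sets of size $\#Y$ it maps $Y_x$ \emph{onto} a fiber. \emph{This valency-gap step is the crux}; once the fibers are seen to be intrinsic, the remainder is bookkeeping.

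Next, $f$ induces a bijection $\pi(f)=\bar f\in\Aut_\Sets(X)$ of the set of fibers. To see $\bar f$ is a scheme automorphism, take $x\neq x'$ and points $q\in Y_x$, $q'\in Y_{x'}$: their relation is the front-type relation $R_X(x,x')$, and applying $\sigma$ yields $R_X(\bar f(x),\bar f(x'))=\sigma(R_X(x,x'))$. Hence $(\bar f,\bar\sigma)$, with $\bar\sigma:=\sigma|_{I_X}$ (well defined since $\sigma$ fixes $i_0$ and preserves front-type relations), satisfies the diagram of Definition~\ref{def:cat}, so $\pi(f)\in\Aut(X)$. The assignment $P\colon(f,\sigma)\mapsto\pi(f)$ is visibly a group homomorphism. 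For surjectivity together with a splitting I send $(g,\rho)\in\Aut(X)$ to $s(g,\rho):=(g\times\id_Y,\ \rho\lexprod\id_{I_Y})$ (using Definition~\ref{def:functor}); a direct check on front- and rear-type relations shows $s(g,\rho)\in\Aut(X\lexprod Y)$, that $s$ is a homomorphism, and that $P\circ s=\id$. This produces the split short exact sequence and the identification $\Aut(X\lexprod Y)\cong K\rtimes\Aut(X)$ with $K=\ker P$, whose internal structure I will quote from Proposition~\ref{prop:ker}.

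Finally, I embed $\Aut(Y)$ into $K$ by $\iota\colon(h,\tau)\mapsto(\id_X\times h,\ \id_{I_X}\lexprod\tau)$, i.e.\ acting diagonally by $h$ on every fiber and trivially on $X$. The same relation-by-relation verification shows $\iota(h,\tau)\in\Aut(X\lexprod Y)$, and since it fixes each fiber setwise it lies in $K$; injectivity is immediate. Because $s$ affects only the $X$-factor and the $I_X$-summand while $\iota$ affects only the $Y$-factor and the $I_Y$-summand, their images commute: on underlying sets $(g\times\id_Y)(\id_X\times h)=g\times h=(\id_X\times h)(g\times\id_Y)$, and on relations $(\rho\lexprod\id_{I_Y})(\id_{I_X}\lexprod\tau)=\rho\lexprod\tau=(\id_{I_X}\lexprod\tau)(\rho\lexprod\id_{I_Y})$. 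Since $\iota(\Aut(Y))\subseteq K$ while $s(\Aut(X))$ maps isomorphically onto $\Aut(X)$ under $P$, the two images intersect trivially, so together they generate an internal direct product, giving the asserted subgroup $\Aut(Y)\times\Aut(X)$ of $\Aut(X\lexprod Y)$.
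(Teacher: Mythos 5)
Your proof is correct and follows essentially the same route as the paper's: a valency comparison (front-type relations have valency $k_{i_X}\#Y\ge\#Y$, rear-type ones strictly less) shows $\sigma$ preserves the front/rear partition, hence $f$ permutes the fibers of $p$, and the splitting $g\mapsto g\times\id_Y$ and the embedding $h\mapsto\id_X\times h$ are exactly the maps used in the paper. The only differences are presentational --- you argue element-wise where the paper chases commutative diagrams, and you make explicit the trivial-intersection step for the $\Aut(Y)\times\Aut(X)$ subgroup, which the paper leaves implicit.
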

\begin{proof}
We construct $P$. 
An automorphism in $\Aut(X\lexprod Y)$ is a pair of bijections
$$f:X \times Y \to X \times Y,$$ 
$$\tau : I_X\lexprod I_Y \to I_X\lexprod I_Y$$
which makes the diagram 
\begin{equation}\label{eq:tau}
    \begin{aligned}
      \begin{tikzpicture}[auto]
      \node (a1) at (0,2) {$(X\times Y)\times(X\times Y)$};
      \node (a2) at (0,0) {$I_{X}\lexprod I_{Y}$};
      \node (b1) at (4.5,2) {$(X\times Y)\times(X\times Y)$};
      \node (b2) at (4.5,0) {$I_{X}\lexprod I_{Y}$};
      \node (1) at (2.25,1) {$\circlearrowleft$};
      \draw[->](a1) to node[swap] {$R_{\lexprod}$} (a2);
      \draw[->](a1) to node {$f\times f$} (b1);
      \draw[->](a2) to node {$\tau$} (b2);
      \draw[->](b1) to node {$R_{\lexprod}$} (b2);
      \end{tikzpicture}
    \end{aligned}
\end{equation}
commute.
We claim that $\tau$ permutes $I_X\setminus \{i_0\}$ and $I_Y$ separately.
Take an element $i_X \in I_X\setminus \{i_0\}$. 
Then the corresponding valency is $k_{i_X}\#Y\geq \#Y$, by 
Theorem~\ref{th:main}. For $i_Y\in I_Y$, the valency is
$k_{i_Y}\leq\#Y$, with equality holds only if 
$I_Y=\{i_0\}$. Thus, there is no automorphism that maps $i_X$ to $i_Y$.
Thus we have $\pi(\tau)$ which makes the following commute:
\begin{equation}\label{eq:I}
    \begin{aligned}
      \begin{tikzpicture}[auto]
      \node (a1) at (0,2) {$I_{X}\lexprod I_{Y}$};
      \node (a2) at (0,0) {$I_{X}$};
      \node (b1) at (3,2) {$I_{X}\lexprod I_{Y}$};
      \node (b2) at (3,0) {$I_{X}$};
      \node (1) at (1.5,1) {$\circlearrowleft$};
      \draw[->](a1) to node[swap] {$\pi_{I}$} (a2);
      \draw[->](a1) to node {$\tau$} (b1);
      \draw[->](a2) to node {$\pi(\tau)$} (b2);
      \draw[->](b1) to node {$\pi_{I}$} (b2);
      \end{tikzpicture}.
    \end{aligned}
\end{equation}
For $(x,y)$ and $(x,y')$, since 
$$
R_\lexprod((x,y),(x,y'))\in I_Y \subset I_X\lexprod I_Y,
$$ 
we have
$$
R_\lexprod(f(x,y),f(x,y'))\in I_Y \subset I_X\lexprod I_Y.
$$
This means that
the $X$-component of $f(x,y)$ is the same with 
that of $f(x,y')$, i.e., $f$ maps a fiber of $p$
to another fiber, which shows the unique existence of 
$\pi(f)$ that makes a commutative diagram
\begin{equation}\label{eq:comm-f}
    \begin{aligned}
      \begin{tikzpicture}[auto]
      \node (a1) at (0,2) {$X\times Y$};
      \node (a2) at (0,0) {$X$};
      \node (b1) at (3,2) {$X\times Y$};
      \node (b2) at (3,0) {$X$.};
      \node (1) at (1.5,1) {$\circlearrowleft$};
      \draw[->](a1) to node[swap] {$p$} (a2);
      \draw[->](a1) to node {$f$} (b1);
      \draw[->](a2) to node {$\pi(f)$} (b2);
      \draw[->](b1) to node {$p$} (b2);
      \end{tikzpicture}
    \end{aligned}
\end{equation}
Now we have
\begin{equation}\label{eq:hexagon}
    \begin{aligned}
      \begin{tikzpicture}[auto]
      \node (a) at (0,0) {$X\times X$};
      \node (b1) at (3,2) {$(X\times Y)\times(X\times Y)$};
      \node (b2) at (3,0) {$I_{X}\lexprod I_{Y}$};
      \node (b3) at (3,-2) {$I_{X}$};
      \node (c1) at (10.2-3,2) {$(X\times Y)\times(X\times Y)$};
      \node (c2) at (10.2-3,0) {$I_{X}\lexprod I_{Y}$};
      \node (c3) at (10.2-3,-2) {$I_{X}$,};
      \node (d) at (10.2,0) {$X\times X$};
      \node (1) at (1.6,0) {$\circlearrowleft$};
      \node (2) at (5.1,1) {$\circlearrowleft$};
      \node (3) at (5.1,-1) {$\circlearrowleft$};
      \node (4) at (10.2-1.6,0) {$\circlearrowleft$};
      \draw[->](b1) to node {} (a);
      \draw[->](a) to node {} (b3);
      \draw[->](b1) to node {} (b2);
      \draw[->](b2) to node {} (b3);
      \draw[->](b1) to node {$f\times f$} (c1);
      \draw[->](b2) to node {$\tau$} (c2);
      \draw[->](b3) to node {$\pi(\tau)$} (c3);
      \draw[->](c1) to node {} (c2);
      \draw[->](c2) to node {} (c3);
      \draw[->](c1) to node {} (d);
      \draw[->](d) to node {} (c3);
      \end{tikzpicture}
    \end{aligned}
\end{equation}
where the commutativity of the two triangles
follows from Proposition~\ref{prop:proj}.
This commutativity, (\ref{eq:comm-f}), and the surjectivity of the projection 
$X\times Y \to X$ conclude the commutativity of 
\begin{equation}\label{eq:comm-tau}
    \begin{aligned}
      \begin{tikzpicture}[auto]
      \node (a1) at (0,2) {$X\times X$};
      \node (a2) at (0,0) {$I_{X}$};
      \node (b1) at (3.5,2) {$X\times X$};
      \node (b2) at (3.5,0) {$I_{X}$};
      \node (1) at (1.75,1) {$\circlearrowleft$};
      \draw[->](a1) to node[swap] {} (a2);
      \draw[->](a1) to node {$\pi(f)\times\pi(f)$} (b1);
      \draw[->](a2) to node {$\pi(\tau)$} (b2);
      \draw[->](b1) to node  {} (b2);
      \end{tikzpicture},
    \end{aligned}
\end{equation}
and hence gives an automorphism of the association scheme $X$,
which gives the group homomorphism $\gpr$.
Conversely, an automorphism $(g,\tau)$ of $X$,
namely, a pair of bijections $g:X \to X$, $\tau:I_X\to I_X$ with
an additional commutativity condition
gives a pair of bijections $g \times \id_Y:X\times Y \to X\times Y$
and $\tau \lexprod \id_{I_Y}: I_X \lexprod I_Y \to I_X \lexprod I_Y$
(see Definition~\ref{def:functor}), 
which is an automorphism 
of the association scheme $X\lexprod Y$, splitting the homomorphism $\gpr$.
On the other hand, it is easy to show 
that any automorphism $(g,\sigma)$ of $Y$ induces
an automorphism $(f,\tau)$ of $X\lexprod Y$, by putting $f=\id_X\times g$
and $\tau = \id_{I_X} \lexprod \sigma$ (see Definition~\ref{def:functor}),
which lies in the kernel of $\gpr$ and commutes with 
the image of the splitting of $\gpr$.
\end{proof}

On the kernel of $\gpr$, we show the following.
\begin{proposition}~\label{prop:ker}
The kernel $K$ of $\gpr:\Aut(X\lexprod Y) \to \Aut(X)$
is a subgroup of $\prod_{x\in X}\Aut(Y_x)$. 
More precisely,
we have 
\begin{equation}\label{eq:K}
K = \coprod_{\tau \in T}(\prod_{x\in X} \Aut(Y_x)_\tau), 
\end{equation}
where $T$ is the image in $\Aut_\Sets(I_Y)$ of $\Aut(Y)$,
and $\Aut(Y_x)_\tau$ denotes the set of elements in $\Aut(Y_x)$
whose $\Aut_\Sets(I_Y)$-component is $\tau$. (Note that
for $(f,\tau)\in \Aut(Y)$, $f$ uniquely determines $\tau$,
hence the above union is disjoint.)

Thus, there is an injective group homomorphism
$$
\Aut(X\lexprod Y) \hookrightarrow \Aut(Y_x) \wr_X \Aut(X),
$$
where the right-hand side is the wreath product of 
groups
with respect to the action of $\Aut(X)$
on $X$.
\end{proposition}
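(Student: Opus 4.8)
The plan is to unravel the kernel condition and recognize an element of $K$ as a fiberwise family of automorphisms of $Y$ that all realize one and the same permutation of $I_Y$. First I would take $(f,\tau)\in K$, so that $\gpr(f,\tau)=\id$; by the defining diagram~(\ref{eq:comm-f}) this forces $\pi(f)=\id_X$, so $f$ preserves each fiber $Y_x=p^{-1}(x)=\{x\}\times Y$ and restricts there to a bijection I write as $f(x,y)=(x,g_x(y))$ with $g_x\in\Aut_\Sets(Y)$. On the relation side, the proof of Theorem~\ref{th:aut} already shows that $\tau$ permutes $I_X\setminus\{i_0\}$ and $I_Y$ separately; combining this with $\pi(\tau)=\id_{I_X}$ read off from~(\ref{eq:I}), the restriction of $\tau$ to $I_X\setminus\{i_0\}$ is the identity, while on $I_Y$ it is some $\bar\tau\in\Aut_\Sets(I_Y)$.

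Next I would extract the constraint binding the $g_x$ together. For two points of a common fiber, $R_\lexprod((x,y),(x,y'))=R_Y(y,y')\in I_Y$, so commutativity of~(\ref{eq:tau}) reads $\bar\tau(R_Y(y,y'))=R_Y(g_x(y),g_x(y'))$. This says precisely that $(g_x,\bar\tau)$ is an automorphism of the association scheme $Y$ for every $x$, and — this is the decisive point — the relation-component $\bar\tau$ is \emph{common} to all $x$ because it descends from the single global $\tau$. Hence $K$ injects into $\prod_{x\in X}\Aut(Y_x)$ via $(f,\tau)\mapsto(g_x)_x$, with image exactly the tuples whose components all have the same $\Aut_\Sets(I_Y)$-image $\bar\tau\in T$, which is the content of~(\ref{eq:K}). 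For the converse I would, given $\tau\in T$ and a tuple with each $g_x\in\Aut(Y_x)_\tau$, assemble $f(x,y)=(x,g_x(y))$ together with the global relation map equal to $\id$ on $I_X\setminus\{i_0\}$ and to $\bar\tau$ on $I_Y$, then verify~(\ref{eq:tau}) by splitting into the same-fiber case (handled because each $g_x$ is a $Y$-automorphism with component $\bar\tau$) and the distinct-fiber case (where both $X$-components are untouched, so the governing relation lies in $I_X\setminus\{i_0\}$ and is fixed).

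Finally, the wreath embedding is a formal consequence: Theorem~\ref{th:aut} gives $\Aut(X\lexprod Y)\cong K\rtimes\Aut(X)$, and since $\Aut(X)$ permutes the fibers $Y_x$ through its action on $X$, the conjugation action on $K$ coming from the splitting matches the factor-permuting action on $\prod_x\Aut(Y_x)$. Composing the inclusion $K\hookrightarrow\prod_x\Aut(Y_x)$ with the identity on the $\Aut(X)$-factor therefore yields an injective group homomorphism $\Aut(X\lexprod Y)\hookrightarrow\Aut(Y_x)\wr_X\Aut(X)$.

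I expect the main obstacle to be cleanly establishing the synchronization of the relation-components. The reason $K$ is a proper subgroup of $\prod_x\Aut(Y_x)$ in general, rather than the whole product, is that one $\tau$ governs all fibers simultaneously, forcing every $g_x$ to induce the same permutation $\bar\tau$ of $I_Y$. Making this constraint precise, and checking that the disjoint-union description~(\ref{eq:K}) is exactly the set of tuples it permits, is where the care lies; everything else is bookkeeping with the commutative diagrams of Theorem~\ref{th:aut}.
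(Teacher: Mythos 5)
Your proposal is correct and follows essentially the same route as the paper: restrict a kernel element fiberwise to get $(g_x,\bar\tau)\in\Aut(Y)$ with the single global $\tau$ forcing a common relation-component across all fibers, prove the converse by the same-fiber/distinct-fiber case split, and obtain the wreath embedding from the block-preserving permutation structure. The only (minor, welcome) difference is that you justify $\tau|_{I_X\setminus\{i_0\}}=\id$ explicitly via $\pi(\tau)=\id_{I_X}$ and the separate permutation of $I_X\setminus\{i_0\}$ and $I_Y$, where the paper simply asserts $\sigma=\id_{I_X}\lexprod(\sigma|_{I_Y})$.
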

\begin{proof}
We compute the kernel $K$ of $P$. 
It is clear that
$$
K \subset \prod_{x \in X} \Aut_\Sets(Y_x).
$$
Take any $(k,\sigma)\in K$ and $x\in X$, and look the action on $Y_x$.
Clearly $\sigma=(\id_{I_X}\lexprod (\sigma|_{I_Y}))$.
There exists $k_x\in \Aut_\Sets(Y)$
such that $k(x,y)=(x,k_x(y))$.
For any $y_1, y_2\in Y$, 
\begin{eqnarray*}
R_Y(k_x(y_1),k_x(y_2))
&=&R_\lexprod((x,k_x(y_1)),(x,k_x(y_2))) \\
&=&R_\lexprod(k(x,y_1),k(x,y_2))) \\
&=&\sigma \circ R_\lexprod((x,y_1),(x,y_2))) \\
&=&\sigma|_{I_Y} \circ R_Y(y_1,y_2) \\
\end{eqnarray*}
implies that $(k_x, \sigma|_{I_Y}) \in \Aut(Y)$.
This holds for any $x\in X$, and thus 
$$
(k,\id_{I_X}\lexprod (\sigma|_{I_Y}))\in \prod_{x\in X}\Aut(Y_x)_{\sigma|_{I_Y}}.
$$
It follows that
$$
K \subset \coprod_{\tau \in T}\prod_{x \in X} \Aut(Y_x)_\tau.
$$
Conversely, take an element from the right-hand side of (\ref{eq:K}):
$$
 k=\prod_{x\in X}k_x \in \prod_{x\in X}\Aut_\Sets(Y_x)\subset
 \Aut_\Sets(X\times Y). 
$$
Thus, $k_x$ shares $\tau \in \Aut_\Sets(I_Y)$ for any $x$
such that $(k_x,\tau)\in \Aut(Y)$.
We claim that $(k, \id_{I_X}\lexprod \tau) \in K$.
In fact, for $(x_1,y_1)$ and $(x_2,y_2)$,
if $x_1\neq x_2$, then
\begin{eqnarray*}
R_\lexprod(k(x_1,y_1),k(x_2,y_2))
&=&
R_\lexprod((x_1,k_{x_1}(y_1)),(x_2,k_{x_2}(y_2)))\\
&=&
R_X(x_1,x_2) \\
&=&
(\id_{I_X}\lexprod \tau) \circ R_\lexprod((x_1,y_1),(x_2,y_2)),
\end{eqnarray*}
and if 
$x_1 = x_2$, then
\begin{eqnarray*}
R_\lexprod(k(x_1,y_1),k(x_2,y_2))
&=&
R_\lexprod(x_1,k_{x_1}(y_1)),(x_2,k_{x_2}(y_2))\\
&=&
R_Y(k_{x_1}(y_1), k_{x_2}(y_2)) \\
&=&
R_Y(k_{x_1}(y_1), k_{x_1}(y_2)) \\
&=&
\tau\circ R_Y(y_1, y_2) \\
&=&
(\id_{I_X}\lexprod \tau) \circ R_\lexprod((x_1,y_1),(x_2,y_2)),
\end{eqnarray*}
which imply $(k, \id_{I_X}\lexprod \tau) \in K$.
Thus 
$$
K = \coprod_{\tau \in T}\prod_{x \in X} \Aut(Y_x)_\tau
$$
follows. The rest of the claims, i.e., the relation 
to the wreath product, holds at the level of 
permutation groups. That is, the set of permutations of
$X \times Y$ that permute $Y_x$ block-wise is isomorphic to
$$
\Aut_\Sets(Y_x)\wr_X \Aut_\Sets(X).
$$
The detail is omitted.
\end{proof}
\commentout{
The stabilizer group of 
$\Aut(X\lexprod Y)$ of $x\in X$
acts on the fiber $Y_x$ through $\Aut(Y)$.
\begin{proposition}
Let $X$ and $Y$ be association schemes.
Let $p:X\times Y \to X$ be the projection, 
and for $x\in X$, $Y_x$ denote the fiber $p^{-1}(x)$,
naturally identified with $Y$. By Theorem~\ref{th:aut},
$\Aut(X\lexprod Y)$ acts on the underlying set $X$.
Let $G_x\subset \Aut(X\lexprod Y)$ be the stabilizer of $x\in X$.
Then, $G_x$ acts on $Y_x$, which gives a 
group homomorphism
$$
G_x \to \Aut(Y),
$$
which is surjective and splitting.
The splitting is given in Theorem~\ref{th:aut}.
\end{proposition}
\begin{proof}
Clearly $G_x$ acts on $Y_x=Y$. 
For $(f,\sigma)\in G_x$,
we may define $f'\in \Aut_\Sets(Y)$
such that $f(x,y)=(x,f'(y))$.
If
$$
i_Y=R_\lexprod((x,y),(x,y'))=R_Y(y,y'),
$$ 
then
$$
\sigma(i_Y)=R_\lexprod(f(x,y),f(x,y'))
=R_\lexprod((x,f'(y)),(x,f'(y')))
=R_Y(f'(y), f'(y')).
$$
This gives $(f',\sigma|_{I_Y})\in \Aut(Y)$,
and hence a group homomorphism
$$
G_x \to \Aut(Y).
$$
This has a splitting, since the image of the
embedding 
$$
\Aut(Y) \to \Aut(X\lexprod Y)
$$
in Theorem~\ref{th:aut} is in $G_x$, and restriction
to the fiber $Y_x$ gives the identity.
\end{proof}
}

\subsection{Construction of non-Schurian schemes}
\label{sec:const-non-sch}
The results of this subsection are known to 
the researchers in this area, and proofs are given
in Chen and Ponomarenko 
\cite[Sections~3.2 and 3.4]{CHEN-PON}
with more systematic and detailed studies. 
However, we felt that it might not easy
to follow the details, and decided to include our proofs,
for the reader's convenience. 
\begin{definition} (Schurian association schemes)
\label{def:schurian-sch}

Let $G$ be a group and $X$ a set with left transitive
action of $G$. Then
the quotient by the diagonal action
$$
 R:X \times X \to G\backslash (X\times X)=:I
$$
is known to be a (possibly non-commutative)
association scheme.
An association scheme isomorphic to 
this type is called a Schurian association scheme.
\end{definition}
We may replace $G$ with its image in $\Aut_\Sets(X)$.
Then, a pair $(g, \id_I)$ for $g \in G$ is an automorphism
of the Schurian scheme, and $G$ transitively acts on $R^{-1}(i)$
for each $i\in I$.
The following is immediate from this observation.
\begin{lemma}\label{lem:X-I}
Let $(X,R,I)$ be an association scheme.
Let $\Aut(X|I)\subset \Aut(X)$ be the subgroup
consisting of elements that
act on $I$ trivially. Then, 
$X$ is Schurian if and only if $\Aut(X|I)$ acts transitively on $R^{-1}(i)$
for each $i\in I$.
\end{lemma}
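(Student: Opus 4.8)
The plan is to prove the two implications by matching, in both directions, the fibers $R^{-1}(i)$ with the orbits of a permutation group acting diagonally on $X\times X$. The preliminary observation I would record first is a reformulation of $\Aut(X|I)$: an element of this group is a pair $(f,\id_I)$ and is therefore determined by a bijection $f\in\Aut_\Sets(X)$ with $R(f(x),f(y))=R(x,y)$ for all $x,y\in X$. Writing $G$ for the image of $\Aut(X|I)$ in $\Aut_\Sets(X)$, this condition says exactly that the diagonal action of $G$ on $X\times X$ preserves each fiber $R^{-1}(i)$; equivalently, every $G$-orbit on $X\times X$ is contained in a single fiber $R^{-1}(i)$. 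I would also note that the fiber $R^{-1}(i_0)$ is the diagonal $\{(x,x)\mid x\in X\}$, since $A_{i_0}$ is the identity matrix.

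For the forward implication, suppose $X$ is Schurian, realized as $H\backslash(X\times X)$ for a subgroup $H\le\Aut_\Sets(X)$ acting transitively on $X$, so that the fibers $R^{-1}(i)$ are precisely the $H$-orbits on $X\times X$. Each $h\in H$ then preserves every orbit, hence $(h,\id_I)\in\Aut(X|I)$, so $H$ is contained in $G$. Since $H$ by construction acts transitively on each of its orbits $R^{-1}(i)$, so does the larger group $\Aut(X|I)$, which is the desired conclusion.

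For the converse, assume $\Aut(X|I)$ acts transitively on $R^{-1}(i)$ for every $i\in I$, and let $G$ be its image in $\Aut_\Sets(X)$. By the preliminary observation each $G$-orbit lies in one fiber, while the transitivity hypothesis forces each fiber to be a single $G$-orbit; hence the $G$-orbits on $X\times X$ are exactly the fibers $R^{-1}(i)$. Applying this to $i=i_0$, where the fiber is the diagonal, shows that $G$ acts transitively on $X$. Consequently the Schurian scheme $G\backslash(X\times X)$ has the same underlying set and the same relations as $(X,R,I)$, and the identity on $X$ together with the induced bijection $I\to G\backslash(X\times X)$ is an isomorphism of association schemes, so $X$ is Schurian.

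I do not expect a serious obstacle here, as the lemma is essentially a translation of the definition of Schurian into the language of orbits; the only point requiring a little care is the converse, where one must separately extract transitivity of $G$ on $X$ itself from the hypothesis at $i=i_0$ before invoking the definition of a Schurian scheme.
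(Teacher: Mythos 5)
Your proof is correct and follows essentially the same route as the paper, which merely records the observation that for a Schurian scheme the group acts by automorphisms fixing $I$ and transitively on each fiber, and then declares the lemma immediate. Your write-up usefully spells out the converse — in particular extracting transitivity of $G$ on $X$ from the fiber over $i_0$ — which is exactly the point the paper leaves implicit.
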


The next is \cite[Corollary~3.2.22]{CHEN-PON}.
\begin{proposition}\label{prop:schurian}
Let $X$ and $Y$ be association schemes.
The following are equivalent.
\begin{enumerate}
\item \label{enum:both}
Both $X$ and $Y$ are Schurian.
\item \label{enum:dir-prod}
The direct product $X\times Y$
is Schurian. 
\end{enumerate}
\end{proposition}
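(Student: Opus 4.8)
The plan is to prove both implications through the automorphism-group characterization of Lemma~\ref{lem:X-I}: a scheme $(Z,R,I)$ is Schurian if and only if $\Aut(Z|I)$, the subgroup acting trivially on $I$, is transitive on $R^{-1}(i)$ for every $i\in I$. Recall from Proposition~\ref{prop:direct} that the direct product has relation set $I_X\times I_Y$, identity relation $(i_0,i_0)$, and relation map $(R_X\times R_Y)((a,b),(c,d))=(R_X(a,c),R_Y(b,d))$. The elementary observation I would use throughout is that $R_X(a,c)=i_0$ holds exactly when $a=c$, so the $X$-component of the relation of a pair of points equals $i_0$ precisely when the two points share their $X$-coordinate.

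For the implication (\ref{enum:both})$\Rightarrow$(\ref{enum:dir-prod}), I would verify the criterion of Lemma~\ref{lem:X-I} for $X\times Y$ directly. Given two pairs $((x_1,y_1),(x_2,y_2))$ and $((x_1',y_1'),(x_2',y_2'))$ with the same image $(i_X,i_Y)$ under $R_X\times R_Y$, we have $R_X(x_1,x_2)=R_X(x_1',x_2')$ and $R_Y(y_1,y_2)=R_Y(y_1',y_2')$. Since $X$ and $Y$ are Schurian, Lemma~\ref{lem:X-I} furnishes $(f_X,\id)\in\Aut(X|I_X)$ and $(f_Y,\id)\in\Aut(Y|I_Y)$ carrying $(x_1,x_2)$ to $(x_1',x_2')$ and $(y_1,y_2)$ to $(y_1',y_2')$. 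Then $f_X\times f_Y$ is an automorphism of $X\times Y$ acting trivially on $I_X\times I_Y$ and carrying the first pair to the second, which gives the required transitivity.

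For the converse (\ref{enum:dir-prod})$\Rightarrow$(\ref{enum:both}), the essential move is to descend automorphisms of $X\times Y$ to automorphisms of $X$. Assuming $X\times Y$ is Schurian, to check transitivity of $\Aut(X|I_X)$ on $R_X^{-1}(i_X)$ I would take $(x_1,x_2)$ and $(x_1',x_2')$ with $R_X(x_1,x_2)=R_X(x_1',x_2')=i_X$, fix arbitrary $y_1,y_2\in Y$, and observe that $((x_1,y_1),(x_2,y_2))$ and $((x_1',y_1),(x_2',y_2))$ have the same image under $R_X\times R_Y$. Lemma~\ref{lem:X-I} then yields $(f,\id)$ on $X\times Y$, trivial on $I_X\times I_Y$, with $f(x_1,y_1)=(x_1',y_1)$ and $f(x_2,y_2)=(x_2',y_2)$. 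The main step is the descent: since $f$ preserves relations whose $X$-component is $i_0$, it preserves the property that two points share an $X$-coordinate, hence maps each fiber $\{x\}\times Y$ into a single fiber and induces a bijection $\bar f$ of $X$; reading off $X$-coordinates gives $\bar f(x_1)=x_1'$, $\bar f(x_2)=x_2'$, and varying the $Y$-coordinates gives $R_X(\bar f(a),\bar f(b))=R_X(a,b)$ for all $a,b$, so $(\bar f,\id)\in\Aut(X|I_X)$. Transitivity follows, so $X$ is Schurian, and the argument for $Y$ is symmetric. I expect this descent step to be the only real obstacle, and it is resolved precisely by the observation that the identity relation of $X$ detects equality of $X$-coordinates.
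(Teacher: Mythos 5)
Your proposal is correct and follows essentially the same route as the paper: both directions go through Lemma~\ref{lem:X-I}, the forward implication uses the product automorphism $f_X\times f_Y$ acting trivially on $I_X\times I_Y$, and the converse uses exactly the paper's descent — the identity relation in the $X$-component detects equality of $X$-coordinates, so an automorphism trivial on $I_X\times I_Y$ preserves fibers and induces an element of $\Aut(X|I_X)$. The only cosmetic difference is that the paper fixes a single $y$ for both coordinates of the lifted pair, whereas you allow $y_1,y_2$ to differ; both work.
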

\begin{proof}
Assume (\ref{enum:both}). Then, it is easy
to check that we have a natural group homomorphism
$$
\Aut(X|I_X)\times \Aut(Y|I_Y)
\to \Aut(X\times Y | I_X \times I_Y).
$$
For any $(i_X,i_Y)\in I_X \times I_Y$, 
the subset 
$$
R_{X\times Y}^{-1}(i_X, i_Y)\subset (X\times Y)\times (X\times Y)
$$
is identified with 
$$
R_X^{-1}(i_X)\times R_Y^{-1}(i_Y)
$$
under the identification 
$(X\times Y)\times (X\times Y)=(X\times X)\times (Y\times Y)$,
on which $\Aut(X|I_X)\times \Aut(Y|I_Y)$ acts transitively
by the assumption, and by Lemma~\ref{lem:X-I}, 
we conclude (\ref{enum:dir-prod}).

Conversely, assume (\ref{enum:dir-prod}). By symmetry,
it suffices to show that $X$ is Schurian.
We claim that there is a natural group morphism
$$
\Aut(X\times Y | I_X \times I_Y) \to \Aut(X|I_X).
$$
Let $p:X \times Y \to X$ be the projection.
Take $(f,\id) \in \Aut(X\times Y | I_X \times I_Y)$.
Take any $x\in X$ and $y, y'\in Y$.
Then 
$$
R_{X\times Y}(f(x,y),f(x,y'))=
R_{X\times Y}((x,y),(x,y'))=(i_0,R_Y(y,y'))
$$
holds. Thus $p(f(x,y))=p(f(x,y'))$, hence we have $\pi(f)$
that makes the diagram (\ref{eq:comm-f}) commute.
Take any $y\in Y$.
If we denote $I_X\times I_Y \to I_X$ by the same
symbol $p$, we have
\begin{eqnarray*}
R_X(\pi(f)(x),\pi(f)(x')) 
&=&
R_X(p\circ f(x,y),p\circ f(x',y)) \\
&=&
p\circ R_{X\times Y}(f(x,y),f(x',y)) \\
&=&
p\circ R_{X\times Y}((x,y),(x',y)) \\
&=&
R_{X\times Y}(p(x,y),p(x',y)) \\
&=& R_X(x,x'),
\end{eqnarray*}
which implies $(\pi(f),\id)\in \Aut(X|I_X)$.
Suppose that 
$$
R_X(x_1,x_1')=R_X(x_2,x_2')=i_X.
$$
Take any $y\in Y$. Then 
$$
R_{X\times Y}((x_1,y),(x_1',y))
=R_{X\times Y}((x_2,y),(x_2',y))=(i_X,i_0).
$$
Thus by the assumption (\ref{enum:dir-prod}), 
we have $(f,\id)\in \Aut(X\times Y | I_X \times I_Y)$
such that
$$
f(x_1,y)=(x_2,y), \quad f(x_1',y)=(x_2',y).
$$
Then, by taking the image of $p$, 
$$
\pi(f)(x_1)=x_2, \quad \pi(f)(x_1')=x_2'.
$$
Thus, $\Aut(X|I_X)$ transitively acts on 
$R_X^{-1}(i_X)$, which shows (\ref{enum:both}).
\end{proof}

The next is \cite[Corollary~3.4.7]{CHEN-PON}.
\begin{theorem}\label{th:schurian}
Let $X$ and $Y$ be association schemes.
The following are equivalent.
\begin{enumerate}
\item \label{enum:both-wr}
Both $X$ and $Y$ are Schurian.
\item \label{enum:wr-prod}
The wreath product $X\lexprod Y$
is Schurian. 
\end{enumerate}
\end{theorem}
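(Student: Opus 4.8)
The plan is to combine the Schurian criterion of Lemma~\ref{lem:X-I} with the explicit structure of the automorphism group furnished by Theorem~\ref{th:aut} and Proposition~\ref{prop:ker}. Write $G:=\Aut(X\lexprod Y|I_X\lexprod I_Y)$ for the subgroup of automorphisms acting trivially on the relation set $I_X\lexprod I_Y$; by Lemma~\ref{lem:X-I}, $X\lexprod Y$ is Schurian exactly when $G$ acts transitively on $R_\lexprod^{-1}(i)$ for every $i\in I_X\lexprod I_Y$. The first step is to pin down $G$ concretely. An element $(f,\id)\in G$ permutes the fibers $Y_x$ by Theorem~\ref{th:aut}, inducing $g:=\pi(f)\in\Aut_\Sets(X)$; since $\tau=\id$ we have $\pi(\tau)=\id$, so diagram (\ref{eq:comm-tau}) forces $g\in\Aut(X|I_X)$. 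Writing $f(x,y)=(g(x),h_x(y))$, the requirement that $f$ preserve the rear-type relations $i_Y\in I_Y$ is precisely the condition $(h_x,\id_{I_Y})\in\Aut(Y|I_Y)$ for every $x$ (the computation is the one in Proposition~\ref{prop:ker}), while the front-type relations are preserved automatically because they only see the $X$-coordinates. Conversely any such data defines an element of $G$. Hence $G\cong\Aut(Y|I_Y)\wr_X\Aut(X|I_X)$, with the fiber maps over distinct base points mutually independent.

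For the implication (\ref{enum:both-wr})$\Rightarrow$(\ref{enum:wr-prod}) I would verify transitivity of $G$ on each relation. By Lemma~\ref{lem:X-I}, $Y$ Schurian gives transitivity of $\Aut(Y|I_Y)$ on every $R_Y^{-1}(i_Y)$, and in particular, applying this to the identity relation $i_0\in I_Y$ whose fiber is the diagonal of $Y$, transitivity of $\Aut(Y|I_Y)$ on $Y$ itself; symmetrically for $X$. For a front-type relation $i_X\neq i_0$, the two base points of any pair in $R_\lexprod^{-1}(i_X)$ are distinct, so after aligning the base points by some $g\in\Aut(X|I_X)$ I can adjust the two $Y$-coordinates independently using two (independent) fiber automorphisms drawn from the point-transitive action of $\Aut(Y|I_Y)$ on $Y$. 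For a rear-type relation $i_Y$, the two points of a pair share a base point, and a single $h\in\Aut(Y|I_Y)$ carrying $(y_1,y_1')$ to $(y_2,y_2')$ (which exists as $Y$ is Schurian) together with a $g$ aligning the base points does the job. Thus $G$ is transitive on every relation and $X\lexprod Y$ is Schurian.

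For (\ref{enum:wr-prod})$\Rightarrow$(\ref{enum:both-wr}) I would read off transitivity of the factors from transitivity of $G$, using the concrete description above. Fixing a base point $x_0$ and restricting a transitivity statement for the rear-type relation $i_Y$ to the fiber $Y_{x_0}$ recovers transitivity of $\Aut(Y|I_Y)$ on $R_Y^{-1}(i_Y)$, so $Y$ is Schurian. For $X$, transitivity of $G$ on a front-type relation $i_X\neq i_0$ projects under $f\mapsto\pi(f)$ to transitivity of $\Aut(X|I_X)$ on $R_X^{-1}(i_X)$; the remaining diagonal case $i_X=i_0$ is handled by noting that the identity relation of $X\lexprod Y$ is the rear-type relation $i_0\in I_Y$, whose fiber is the full diagonal of $X\times Y$, so transitivity of $G$ there projects to point-transitivity of $\Aut(X|I_X)$ on $X$, that is, transitivity on $R_X^{-1}(i_0)$. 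Hence $X$ is Schurian.

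The main obstacle is not a hard estimate but the bookkeeping of the first step: correctly identifying $G$ as the group-theoretic wreath product $\Aut(Y|I_Y)\wr_X\Aut(X|I_X)$ and then tracking which relation of $X\lexprod Y$ governs which transitivity. The one genuinely delicate point is that a front-type relation forces the \emph{two} base points of a pair to differ (because $i_X\neq i_0$), which is exactly what makes the two fiber adjustments independent; dually, the identity relation of the wreath product is a \emph{rear}-type relation, and remembering to exploit it is what yields point-transitivity on $X$ and hence the diagonal case required for $X$ to be Schurian.
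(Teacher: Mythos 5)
Your proof is correct and follows essentially the same route as the paper: both rest on Lemma~\ref{lem:X-I} together with the fiber-preservation and kernel structure from Theorem~\ref{th:aut} and Proposition~\ref{prop:ker}, use point-transitivity coming from the diagonal relation, and exploit the independence of fiber automorphisms over distinct base points exactly as the paper does. The only organizational difference is that you first package the analysis into the clean identification $\Aut(X\lexprod Y|I_X\lexprod I_Y)\cong \Aut(Y|I_Y)\wr_X\Aut(X|I_X)$ (which is correct, and is the form of the statement in Chen--Ponomarenko) and then read off both implications, whereas the paper extracts the same facts piecemeal inside the proof via a case division on whether the first coordinates coincide.
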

\begin{proof}
Assume (\ref{enum:wr-prod}).
Put 
$G:=\Aut(X\lexprod Y|I_X\lexprod I_Y)$ as in Lemma~\ref{lem:X-I}.
Take any $(x_1,x_1')$, $(x_2,x_2')$ and $i_X \in I_X$ with 
$$
R_X(x_1,x_1')=R_X(x_2,x_2')=i_X.
$$
We take an arbitrary $y\in Y$,
then 
$$
R_\lexprod((x_1,y),(x_1',y))=R_\lexprod((x_2,y),(x_2',y))
$$
holds by the case division for $x_1=x_1'$ or not. 
We consider the image $\gpr(G)\subset \Aut(X)$ in Theorem~\ref{th:aut}.
Since $X\lexprod Y$ is Schurian,
there is an automorphism $(f,\tau)\in G$ such that
$\tau=\id_{I_X\lexprod I_Y}$,
$f(x_1,y)=(x_2,y)$ and $f(x_1',y)=(x_2',y)$. By Theorem~\ref{th:aut},
there is an automorphism $\gpr((f,\tau))=(\pi(f),\pi(\id))\in \gpr(G)$ of $X$, 
which maps $x_1$ to $x_2$ and $x_1'$ to $x_2'$,
hence $X$ is Schurian. For $Y$, take arbitrary 
$(y_1,y_1')$, $(y_2,y_2')$ and $i_Y\in I_Y$
with 
$$R_Y(y_1,y_1')=R_Y(y_2,y_2')=i_Y.$$
Fix an arbitrary $x \in X$, identify $Y=\{x\}\times Y \subset X\times Y$,
and let $G_x$ be the stabilizer of $\{x\}\times Y$ in $G$
(i.e. the set of elements in $G$ that preserve $\{x\}\times Y$ as a set). 
Hence $G_x$ 
acts on $Y$, and by definition of $G$, $G_x$
trivially acts on $I_Y$ and consequently
$G_x\subset \Aut(Y|I_Y)$ follows by the commutativity of
\begin{center}
    \begin{tikzpicture}[auto]
    \node (a1) at (0,2) {$(\{x\}\times Y)\times (\{x\}\times Y)$};
    \node (a2) at (0,0) {$(X\times Y)\times (X\times Y)$};
    \node (b1) at (4,2) {$\{i_0\}\times I_{Y}$};
    \node (b2) at (4,0) {$I_{X}\lexprod I_{Y}$};
    \draw[{Hooks[right]}->](a1) to node[swap] {} (a2);
    \draw[->](a1) to node {$R_X|_{\{x\}}\times R_{Y}$} (b1);
    \draw[->](a2) to node {$R_{\lexprod}$} (b2);
    \draw[{Hooks[right]}->](b1) to node {} (b2);
    \end{tikzpicture}.
\end{center}
Since
$$
R_\lexprod((x,y_1),(x,y_1'))=R_\lexprod((x,y_2),(x,y_2'))=i_Y
$$
and $X\lexprod Y$ is Schurian, 
there is an automorphism $(f,\id_{I_X\lexprod I_Y}) \in G$
with both $f(x,y_1)=(x,y_2)$ and $f(x,y_1')=(x,y_2')$ hold.
By (\ref{eq:comm-f}), this means $\pi(f)(x)=x$ and
$(f,\id)\in G_x$. Thus, $G_x\subset \Aut(Y|I_Y)$
acts transitively on $(R_X|_{\{x\}}\times R_Y)^{-1}((i_0,i_Y))$,
and hence $Y$ is Schurian. These imply (\ref{enum:both-wr}).

Assume (\ref{enum:both-wr}). Take four points in $X \times Y$
such that
\begin{equation}
R_\lambda((x_1,y_1), (x_1',y_1'))
=
R_\lambda((x_2,y_2), (x_2',y_2')).
\end{equation}
Assume that $x_1=x_1'$. Then, by the definition of $R_\lexprod$,
$x_2=x_2'$, and $R_Y(y_1,y_1')=R_Y(y_2,y_2')$.
Thus, there is an $(f,\id_X)\in \Aut(X|I_X)$
with 
$$
f\times f:(x_1,x_1') \mapsto (x_2,x_2')
$$ 
(since both lie in $R_X^{-1}(i_0)$).
There is a $(g,\id_Y) \in \Aut(Y|I_Y)$
with 
$$
g\times g:(y_1,y_1')=(y_2,y_2').
$$
By the last statement of Theorem~\ref{th:aut},
we have 
$$
(f,\id_X) \circ (g,\id_Y) \in \Aut(X\lexprod Y|I_X\lexprod I_Y)
$$
that maps 
$(x_1,y_1)\mapsto (x_2,y_2)$ and 
$(x_1',y_1')\mapsto (x_2',y_2')$, which implies (\ref{enum:wr-prod}).

Assume that $x_1 \neq x_1'$.
In this case, $R_X(x_1,x_1')=R_X(x_2,x_2')$ and $x_2\neq x_2'$, 
hence there is an $f\in \Aut(X|I_X)$ with
$$
f\times f:(x_1,x_1') \mapsto (x_2,x_2').
$$
Then $(f\times \id_Y|\id)\in \Aut(X\lexprod Y |I_X\lexprod I_Y)$
maps 
\begin{equation}\label{eq:maps-pair}
(x_1,y_1) \mapsto (x_2,y_1), \quad
(x_1',y_1') \mapsto (x_2',y_1').
\end{equation}
Recall that $\Aut(Y_{x_2}|I_{Y_{x_2}})=\Aut(Y_{x_2})_{\id_{I_Y}}$
in Proposition~\ref{prop:ker} lies in $K$,
and since $x_2 \neq x_2'$, 
$$
\Aut(Y_{x_2})_{\id_{I_Y}}
\times
\Aut(Y_{x_2'})_{\id_{I_Y}} \in K.
$$
Thus, since $Y$ is Schurian, there are 
a $g \in \Aut(Y_{x_2}|I_{Y_{x_2}})$ mapping
$y_1 \mapsto y_2$ and 
a $g'\in \Aut(Y_{x_2'}|I_{Y_{x_2}'})$ mapping
$y_1' \mapsto y_2'$. Thus, 
$g\circ g' \in \Aut(X\lexprod Y|I_X\lexprod I_Y)$
maps
\begin{equation}
(x_2,y_1)  \mapsto (x_2,y_2), \quad
(x_2',y_1')\mapsto (x_2',y_2'). 
\end{equation}
Together with (\ref{eq:maps-pair}), we conclude (\ref{enum:both-wr}).
\end{proof}

\begin{corollary}
The wreath product of a non-Schurian association 
scheme and an association scheme (in both order of 
product) is non-Schurian. The same statement holds
for the direct product.
\end{corollary}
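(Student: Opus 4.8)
The plan is to deduce the statement directly by contraposition from the two characterizations already established, namely Theorem~\ref{th:schurian} for the wreath product and Proposition~\ref{prop:schurian} for the direct product. No new combinatorial input is required; the only point worth isolating is that each of these equivalences is phrased symmetrically in its two arguments, so the two orders of the product are handled at once.

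First I would treat the wreath product. Suppose $X$ is non-Schurian and $Y$ is an arbitrary association scheme, and suppose for contradiction that $X \lexprod Y$ is Schurian. By the implication $(\ref{enum:wr-prod})\Rightarrow(\ref{enum:both-wr})$ of Theorem~\ref{th:schurian}, both $X$ and $Y$ would then be Schurian, contradicting the hypothesis on $X$. Hence $X \lexprod Y$ is non-Schurian. Since condition $(\ref{enum:both-wr})$ of Theorem~\ref{th:schurian}---that both components be Schurian---is symmetric in the two factors, the same argument applied to $Y \lexprod X$, now regarding $X$ as the second factor, shows that $Y \lexprod X$ is non-Schurian as well. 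This disposes of both orders of the product.

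Next I would treat the direct product in exactly the same manner, with Proposition~\ref{prop:schurian} in place of Theorem~\ref{th:schurian}: if $X$ is non-Schurian, then $X \times Y$ cannot be Schurian, for Schurian-ness of $X \times Y$ would force both factors to be Schurian by the implication $(\ref{enum:dir-prod})\Rightarrow(\ref{enum:both})$. Here the order is immaterial, since the direct product is the categorical product and hence symmetric up to isomorphism, so $X \times Y \cong Y \times X$ as association schemes.

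I do not expect any genuine obstacle: the entire content of the corollary is carried by the previously proved equivalences, and the statement is simply their contrapositive. The one point that calls for care is invoking the correct direction of each equivalence---that a Schurian product forces both factors to be Schurian---rather than the converse implication, which is not what is needed here.
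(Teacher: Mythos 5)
Your proof is correct and is exactly the argument the paper intends: the corollary is stated without proof as an immediate contrapositive of Theorem~\ref{th:schurian} and Proposition~\ref{prop:schurian}, which is precisely what you carry out. Your remark that the symmetry of condition (\ref{enum:both-wr}) handles both orders of the product at once is the right observation.
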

Thus, there exists a large family of non-Schurian schemes. 
We remark that there are substantial studies on
construction of non-Schurian schemes, e.g., 
Evdokimov-Ponomarenko\cite{EV-NONSCHUR}, Hanaki-Hirai-Ponomarenko\cite{HANAKI-NONSCHUR}, and Hirasaka-Kim\cite{HIRASAKA-KIM}.
Non-Schurian Schur rings are of particular interest
since historically Wielandt\cite[Theorem~26.4]{WIELANDT}
found such an example, answering a question by Schur.
To avoid confusion, we use the term $S$-rings for Schur rings
(see Definition~\ref{def:S-ring} below).

The results for $S$-rings stated in the rest 
of this section are closely related with the results
by Evdokimov-Ponomarenko\cite{EV-NONSCHUR}. They used
generalized wreath products to construct non-Schurian
$S$-rings in a cyclic group, using delicate arguments.
We deal with only direct products of groups and the
usual wreath products, but still give a construction 
of non-Schurian $S$-rings.

We start with a definition of Cayley association schemes,
which is equivalent to the notion of $S$-rings.
We denote by $e$ the unit of a group.
We use the terminologies such as 
$S$-rings and Schurian $S$-rings according to
a survey by Muzychuk and Ponomarenko \cite{MUZYCHUK}.
The following definition of Cayley association
schemes is given in \cite[II.6]{Bannai} (without naming),
as well as the equivalence to the notion of $S$-rings.
We changed $g_2g_1^{-1}$ in the
definition there to $g_1^{-1}g_2$ because we consider 
the left action.
A detailed study on Cayley association schemes
and $S$-rings is found in \cite[Section~2.4]{CHEN-PON}.
\begin{definition} (Cayley association schemes)
\label{def:cayley}

Let $G$ be a finite group.
If there is a surjective mapping $r:G \to I$ such that the
composition 
$$
G \times G \to G \to I, \quad (g_1,g_2) \mapsto r(g_1^{-1}g_2)
$$
is an association scheme, then it is called a Cayley 
association scheme.
\end{definition}
This notion is equivalent to the following notion of
$S$-rings. The conditions on $r:G\to I$ are equivalent,
and the Bose-Mesner algebra of a Cayley
association scheme is naturally isomorphic
to the corresponding $S$-ring.
\begin{definition} ($S$-rings) \label{def:S-ring}

Let $G$ be a finite group, and $r:G \to I$ a surjective 
mapping. Let $\C[G]$ be the group ring. For a subset
$S\subset G$, define
$$\underline{S}:=\sum_{s\in S} s \in \C[G].$$
Let 
$$
A_r = \mbox{ the $\C$-linear span of } 
\underline{r^{-1}(i)} \subset \C[G]
\mbox{ for $i \in I$ }.
$$
Then $A_r$ is called an $S$-ring, if the following
conditions are satisfied.
\begin{enumerate}
 \item $\{e\}=r^{-1}(i_0)$ for some $i_0 \in I$.
 \item $A_r$ is closed under the product in $\C[G]$.
 \item For any $i\in I$, there is $i'\in I$
 with $\{g^{-1} \mid g \in r^{-1}(i)\}=r^{-1}(i')$.
\end{enumerate}
\end{definition}
We want to discuss on the Schurian property of $S$-rings.
\begin{definition} (Schurian $S$-rings)
\label{def:Schurian-ring}

Let $X$ be a finite set.
Let $\Gamma$ be a group transitively acting on $X$,
with a subgroup $G$ acting transitively and faithfully on $X$.
Fix $x\in X$.
Then we have a bijection
$$
G \to X, \quad g \mapsto gx,
$$
and through this bijection $\Gamma$ acts on $G$.
Let $\Gamma_e$ be the stabilizer of $e\in G$.
Consider
$$
r:G \to \Gamma_e\backslash G=:I.
$$ 
Then $r:G \to I$ gives an $S$-ring, which is called
a Schurian $S$-ring.
\end{definition}
The next proposition is well-known, and 
the proof is omitted.
\begin{proposition}\label{prop:equivalence}
Let $G$ be a finite group, and assume that 
$r:G \to I$ gives a Cayley association scheme,
and equivalently, an $S$-ring.
Then, the following are equivalent.
\begin{enumerate}
 \item\label{enum:cayley}
 The Cayley association scheme is Schurian 
 in the sense of Definition~\ref{def:schurian-sch}.
 \item\label{enum:ring}
 The $S$-ring is Schurian in the 
 sense of Definition~\ref{def:Schurian-ring}.
\end{enumerate} 
\end{proposition}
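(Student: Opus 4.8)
The plan is to prove both implications by making explicit the standard dictionary between transitive group actions and coset spaces, while keeping careful track of the fact that for a Cayley scheme the underlying set $X$ is $G$ itself, with $G$ acting by left translation. In both directions the content is the identification of the relation map $R(g_1,g_2)=r(g_1^{-1}g_2)$ with an orbital decomposition.

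For (\ref{enum:ring})$\Rightarrow$(\ref{enum:cayley}) I would start from a group $\Gamma$ as in Definition~\ref{def:Schurian-ring}, with $G\subset\Gamma$ acting transitively and faithfully on $X$ and $r:G\to\Gamma_e\backslash G$. Choosing the base point $x$ so that $G\to X$, $g\mapsto gx$ is the given bijection, I would show that the $\Gamma$-orbits on $X\times X$ are exactly the fibres of $R$. The key computation is that $g_1^{-1}\in G\subset\Gamma$ carries $(g_1,g_2)$ to $(e,g_1^{-1}g_2)$, and that two pairs $(e,h),(e,h')$ are $\Gamma$-equivalent iff some $\gamma$ fixing $e$ sends $h\mapsto h'$, i.e.\ iff $h,h'$ lie in the same $\Gamma_e$-orbit. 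Hence the $\Gamma$-class of $(g_1,g_2)$ is recorded precisely by the $\Gamma_e$-orbit of $g_1^{-1}g_2$, i.e.\ by $r(g_1^{-1}g_2)$. This exhibits the Cayley scheme as the orbital scheme of the transitive group $\Gamma$, so it is Schurian in the sense of Definition~\ref{def:schurian-sch}.

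For (\ref{enum:cayley})$\Rightarrow$(\ref{enum:ring}), rather than extracting a group from an abstract isomorphism, I would take $\Gamma:=\Aut(X|I)$ directly, where $X=(G,R,I)$ is the Cayley scheme. First I would check that left translations $L_h:g\mapsto hg$ lie in $\Gamma$: the identity $R(hg_1,hg_2)=r((hg_1)^{-1}hg_2)=r(g_1^{-1}g_2)=R(g_1,g_2)$ shows $L_h$ preserves $R$ and fixes $I$, so $G\hookrightarrow\Gamma$ acts transitively and faithfully (indeed regularly). Since the scheme is Schurian, Lemma~\ref{lem:X-I} gives that $\Gamma$ acts transitively on each $R^{-1}(i)$. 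Taking $x=e$, so that $G\to X$ is the identity and $\Gamma_e$ is the stabiliser of $e$, I would verify that the orbit map $G\to\Gamma_e\backslash G$ is exactly $r$: if $r(g)=r(g')$ then $(e,g),(e,g')\in R^{-1}(r(g))$ are exchanged by some $\phi\in\Gamma$ fixing $e$, whence $g'=\phi(g)$ with $\phi\in\Gamma_e$; conversely any $\phi\in\Gamma_e$ preserves $R$ and fixes $e$, so $r(\phi(g))=R(e,\phi(g))=R(e,g)=r(g)$. Thus $r$ coincides with the coset map for $\Gamma=\Aut(X|I)$, exhibiting the $S$-ring as Schurian.

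The routine part is this bookkeeping; the only genuine choice is which group to use in (\ref{enum:cayley})$\Rightarrow$(\ref{enum:ring}), and the point I most want to get right is that $\Aut(X|I)$ automatically contains $G$ as a regular subgroup and, by Lemma~\ref{lem:X-I}, is transitive on each relation precisely because the scheme is Schurian. Once that is in place the identification of $r$ with the coset map is a short orbit computation, and the left-versus-right translation conventions are the only thing one has to watch.
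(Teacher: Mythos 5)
The paper does not prove Proposition~\ref{prop:equivalence}; it is stated as well known and the proof is explicitly omitted, so there is no argument of the authors to compare yours against. Your proof is correct and is the standard dictionary one would expect here: for (\ref{enum:ring})$\Rightarrow$(\ref{enum:cayley}) you identify the fibres of $(g_1,g_2)\mapsto r(g_1^{-1}g_2)$ with the orbitals of $\Gamma$ via the normalization $g_1^{-1}\cdot(g_1,g_2)=(e,g_1^{-1}g_2)$, and for (\ref{enum:cayley})$\Rightarrow$(\ref{enum:ring}) you take $\Gamma=\Aut(X|I)$, observe that the left translations $L_h$ sit inside it as a regular subgroup, invoke Lemma~\ref{lem:X-I} for transitivity on each $R^{-1}(i)$, and recover $r$ as the coset map $G\to\Gamma_e\backslash G$. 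Two minor points are worth making explicit. First, the bijection $g\mapsto gx$ in Definition~\ref{def:Schurian-ring} requires $G$ to act regularly (trivial point stabilizer in $G$), which is what your computation $g_1^{-1}\cdot(g_1,g_2)=(e,g_1^{-1}g_2)$ tacitly uses; this is really a feature of the paper's definition rather than a gap in your argument. Second, in both directions the given index set $I$ and the orbit space ($\Gamma\backslash(X\times X)$, respectively $\Gamma_e\backslash G$) agree only up to a canonical bijection, but since both Definition~\ref{def:schurian-sch} and the $S$-ring depend only on the induced partition, this identification is harmless and your phrasing ``$r$ coincides with the coset map'' is the right conclusion.
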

The following proposition is a direct consequence of the definitions 
and the equivalence between Cayley association schemes
and $S$-rings.
\begin{proposition}\label{prop:S-ring}
The wreath product of two $S$-rings
is an $S$-ring. 
The direct product of two $S$-rings
is an $S$-ring. 
\end{proposition}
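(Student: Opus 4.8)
The plan is to show that both products again yield \emph{Cayley} association schemes on the direct product group, and then to invoke the stated equivalence between Cayley association schemes and $S$-rings. By hypothesis the two $S$-rings arise from Cayley association schemes given by surjections $r_X:G_X \to I_X$ and $r_Y:G_Y \to I_Y$, whose relation maps have the translation-invariant form $R_X(g_1,g_1')=r_X(g_1^{-1}g_1')$ and $R_Y(g_2,g_2')=r_Y(g_2^{-1}g_2')$; in particular the underlying sets are the groups themselves, so $X=G_X$ and $Y=G_Y$. In both cases the ambient group will be $G_X \times G_Y$, and the whole point is that the relation map of each product again factors through $(g,g')\mapsto g^{-1}g'$ on this group.

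For the direct product, I would set $r:=r_X \times r_Y : G_X \times G_Y \to I_X \times I_Y$. Using $(g_1,g_2)^{-1}(g_1',g_2')=(g_1^{-1}g_1',\,g_2^{-1}g_2')$, one computes directly that $(R_X \times R_Y)\big((g_1,g_2),(g_1',g_2')\big)=r\big((g_1,g_2)^{-1}(g_1',g_2')\big)$. Surjectivity of $r$ follows from that of $r_X$ and $r_Y$, and $r$ sends the identity of $G_X \times G_Y$ to the basepoint $i_0$ of $I_X \times I_Y$. Since the direct product is already known to be an association scheme, it is thereby of Cayley type, hence an $S$-ring.

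For the wreath product, I would set $r:=\lex \circ (r_X \times r_Y): G_X \times G_Y \to I_X \lexprod I_Y$. Recalling from Definition~\ref{def:lex} that $R_\lexprod=\lex \circ (R_X \times R_Y)$ and reusing the identity $(g_1,g_2)^{-1}(g_1',g_2')=(g_1^{-1}g_1',\,g_2^{-1}g_2')$, the same computation gives $R_\lexprod\big((g_1,g_2),(g_1',g_2')\big)=r\big((g_1,g_2)^{-1}(g_1',g_2')\big)$. Again $r$ is surjective because $r_X$, $r_Y$, and $\lex$ are, and it carries the identity of $G_X \times G_Y$ to the basepoint $i_0 \in I_Y \subset I_X \lexprod I_Y$, which is exactly the relation of the identity matrix $\bI \otimes \bI$ from Theorem~\ref{th:main}. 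As $X \lexprod Y$ is already an association scheme by Theorem~\ref{th:main}, it is Cayley, hence an $S$-ring.

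There is essentially no obstacle beyond bookkeeping: the key structural fact—that the relation map is invariant under simultaneous left translation on $G_X \times G_Y$—is inherited componentwise from $R_X$ and $R_Y$ and is unaffected by post-composing with $\lex$. The only points demanding a moment's care are checking surjectivity of $r$ and verifying that the correct basepoint is preserved, and both are immediate. The remaining $S$-ring axioms (closure under the group-ring product and under inversion) need not be re-checked by hand, since they are encoded in the association-scheme axioms and transported through the equivalence.
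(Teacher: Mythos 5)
Your proof is correct and follows essentially the same route as the paper: both arguments show that the relation map of the direct (resp.\ wreath) product factors through $((g_1,g_2),(g_1',g_2'))\mapsto (g_1^{-1}g_1',g_2^{-1}g_2')$ on $G_X\times G_Y$, the paper phrasing this as a commutative diagram and you as an explicit componentwise computation, and then both invoke the equivalence between Cayley association schemes and $S$-rings. Your extra checks of surjectivity and the basepoint are harmless (the basepoint condition is in fact automatic from the association-scheme axioms), so there is nothing substantive to add.
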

\begin{proof}
Let $G_1$ and $G_2$ be finite groups.
Let $r_1:G_1\to I_1$ and $r_2:G_2 \to I_2$ 
be the corresponding mappings. Then, 
their wreath product is given 
by
\begin{center}
    \begin{tikzpicture}[auto]
    \node (a1) at (0,2) {$(G_{1}\times G_{2})\times (G_{1}\times G_{2})$};
    \node (a2) at (0,0) {$(G_{1}\times G_{1})\times (G_{2}\times G_{2})$};
    \node (b1) at (4,2) {$G_{1}\times G_{2}$};
    \node (b2) at (4,0) {$I_{1}\times I_{2}$};
    \node (c2) at (6.5,0) {$I_{1}\lexprod I_{2}$};
    \draw[->](a1) to node[swap] {} (a2);
    \draw[->](a1) to node {} (b1);
    \draw[->](a2) to node {} (b2);
    \draw[->](b1) to node {} (b2);
    \draw[->](b2) to node {} (c2);
    \end{tikzpicture},
\end{center}
where the top arrow is 
$
((g_1,g_2),(g_3,g_4))\mapsto (g_1,g_2)^{-1}(g_3,g_4)
=(g_1^{-1}g_3,g_2^{-1}g_4)
$.
The definition of the wreath product is via the left bottom corner.
By the commutativity of the diagram, it is an $S$-ring.
The claim for the direct product follows in a similar manner,
by merely removing $I_1\lexprod I_2$ from the above diagram.
\end{proof}
Proposition~\ref{prop:equivalence}, Proposition~\ref{prop:S-ring} 
and Theorem~\ref{th:schurian}
imply the following proposition.
\begin{proposition}
The wreath product of a non-Schurian $S$-ring
and an $S$-ring (in both order) is non-Schurian.
The same statement holds
for the direct product.
\end{proposition}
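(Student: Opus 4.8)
The plan is to chain together the three results cited just above the statement, passing back and forth between $S$-rings and their associated Cayley association schemes. First I would fix a non-Schurian $S$-ring $A_{r_1}$ on a finite group $G_1$, with structure map $r_1\colon G_1 \to I_1$, and an arbitrary $S$-ring $A_{r_2}$ on $G_2$, with $r_2\colon G_2 \to I_2$; let $X_1$ and $X_2$ denote the Cayley association schemes that correspond to $r_1$ and $r_2$ under Definition~\ref{def:cayley}. By Proposition~\ref{prop:equivalence}, the hypothesis that $A_{r_1}$ is non-Schurian as an $S$-ring is equivalent to $X_1$ being non-Schurian as an association scheme.

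The key identification is that the wreath product of the two $S$-rings, as constructed in Proposition~\ref{prop:S-ring}, is exactly the Cayley association scheme attached to the wreath product $X_1 \lexprod X_2$. This is read off from the commutative diagram in the proof of Proposition~\ref{prop:S-ring}: the underlying group is $G_1 \times G_2$ acting on itself by left multiplication, and the relation map factors through $I_1 \times I_2 \to I_1 \lexprod I_2$ precisely as Definition~\ref{def:lex} prescribes, so the association scheme underlying the wreath-product $S$-ring is $X_1 \lexprod X_2$.

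With this in hand, the conclusion is immediate. Since $X_1$ is non-Schurian, Theorem~\ref{th:schurian} forces $X_1 \lexprod X_2$ to be non-Schurian, and applying Proposition~\ref{prop:equivalence} once more shows that the wreath-product $S$-ring is non-Schurian. The reversed order, in which the non-Schurian factor sits on the right, follows verbatim because Theorem~\ref{th:schurian} requires both components to be Schurian symmetrically. For the direct-product assertion one repeats the argument with Proposition~\ref{prop:schurian} in place of Theorem~\ref{th:schurian}, using that Proposition~\ref{prop:S-ring} likewise produces the direct-product $S$-ring as the Cayley scheme of $X_1 \times X_2$.

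The only real content, and hence the main thing to get right, is the identification in the second paragraph — that the wreath-product and direct-product constructions on $S$-rings agree on the nose with the corresponding constructions on association schemes. Everything after that is a formal application of the Schurianness dichotomy; so the plan is essentially to make that matching explicit and then invoke the two equivalences.
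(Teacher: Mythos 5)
Your proposal is correct and follows exactly the route the paper intends: the paper gives no separate proof, stating only that Proposition~\ref{prop:equivalence}, Proposition~\ref{prop:S-ring}, and Theorem~\ref{th:schurian} imply the result, and your argument is precisely the explicit chaining of these three facts (with the key identification of the wreath-product $S$-ring with the Cayley scheme of $X_1 \lexprod X_2$ read off from the diagram in Proposition~\ref{prop:S-ring}). Nothing further is needed.
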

The existence of a large number of non-Schurian $S$-rings follows.
Using generalized wreath products, 
Evdokimov-Ponomarenko\cite{EV-NONSCHUR} proved the following
theorem.
\begin{theorem}\label{th:ev}
Let $n=p_1p_2p_3p_4n'$ be an integer where $p_1,p_2,p_3,p_4$ are prime numbers with 
the condition $\{p_1,p_2\}\cap\{p_3,p_4\}=\emptyset$ and $n'$ is a 
positive integer.
Put $d:=\lcm(p_1-1,p_2-1,p_3-1,p_4-1)$. If $d>2$, then 
the cyclic group of order $n$ has a non-Schurian $S$-ring. 
\end{theorem}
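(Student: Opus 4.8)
The plan is to reduce, by the multiplicativity results of this section, to the construction of a single non-Schurian $S$-ring on a cyclic group built from the four primes, and then to realize that building block as a \emph{generalized} wreath product of two cyclotomic $S$-rings. First I would use the Chinese Remainder Theorem to write $\Z/n \cong \Z/m \times \Z/m'$ with $\gcd(m,m')=1$, where $m$ is the $\{p_1,p_2,p_3,p_4\}$-part of $n$ (so $p_1p_2p_3p_4 \mid m$) and $m'=n/m$. By Proposition~\ref{prop:S-ring} a direct product of $S$-rings is again an $S$-ring, and the proposition immediately following it shows that a direct product one of whose factors is non-Schurian is itself non-Schurian. Hence it suffices to produce a non-Schurian $S$-ring on $\Z/m$; for transparency I describe the model case $m=p_1p_2p_3p_4$, the general case being obtained by absorbing the surviving prime-power factors into the layers below.

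For the building block, decompose $C:=\Z/(p_1p_2p_3p_4)\cong C_{12}\times C_{34}$, where $C_{12}$ and $C_{34}$ are the $p_1p_2$- and $p_3p_4$-parts; these are coprime because $\{p_1,p_2\}\cap\{p_3,p_4\}=\emptyset$. Since $\Aut(C)\cong(\Z/p_1)^\times\times\cdots\times(\Z/p_4)^\times$ has exponent exactly $d=\lcm(p_1-1,p_2-1,p_3-1,p_4-1)$, the hypothesis $d>2$ lets me choose a cyclic multiplier group $M\le\Aut(C)$ of order $d\ge 3$. I would then form the $S$-ring $\A$ over $C$ as a \emph{generalized} wreath product of two cyclotomic $S$-rings $\mathrm{Cyc}(M_{12},C_{12})$ and $\mathrm{Cyc}(M_{34},C_{34})$ (for suitable projections $M_{12},M_{34}$ of $M$) relative to a section of $C$: inside the section the basic sets are $M$-orbits, while the basic sets meeting its complement are required only to be constant on cosets of the lower subgroup, rather than being fully merged over each coset. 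The precise choice of section and of the two layers is the delicate point of the construction, and is where the four-prime, two-plus-two split is used. I emphasize that this is \emph{not} the ordinary wreath product of Definition~\ref{def:lex}: by Theorem~\ref{th:schurian} an ordinary wreath product of the (Schurian) cyclotomic components would again be Schurian, so a genuinely generalized gluing is indispensable.

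The heart of the argument, and the step I expect to be the main obstacle, is to prove that $\A$ is non-Schurian. Translating through Proposition~\ref{prop:equivalence} into the Cayley-scheme language and applying Lemma~\ref{lem:X-I}, I must show that the subgroup of relation-fixing automorphisms fails to act transitively on some basic relation of $\A$. The strategy is to compare the group of algebraic multipliers of $\A$ (the units permuting the basic sets, which contains the diagonal $M$ of order $d$) with the combinatorial automorphisms of its Cayley graph: the diagonal multiplier of order $d$ is consistent on both layers at the level of the $S$-ring, yet realizing it by an actual permutation would require synchronizing an order-$d$ twist on one layer with independently varying order-$d$ twists on the cosets of the other, which is impossible precisely when $d>2$ (for $d\le 2$ the only multipliers are $\pm1$ and $\A$ degenerates to a Schurian cyclotomic or ordinary-wreath $S$-ring). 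This mismatch is exactly the content of the Evdokimov--Ponomarenko analysis; once it is established, the non-Schurian $\A$ on $\Z/m$, formed into a direct product (Proposition~\ref{prop:S-ring}) with any $S$-ring on the coprime factor $\Z/m'$, yields a non-Schurian $S$-ring on $\Z/n$ by the non-Schurian direct-product proposition above.
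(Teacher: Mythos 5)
The paper does not actually prove Theorem~\ref{th:ev}: it is quoted verbatim from Evdokimov--Ponomarenko \cite{EV-NONSCHUR} as a known source of non-Schurian $S$-rings to feed into the wreath-product machinery of Section~\ref{sec:const-non-sch}. So there is no in-paper proof to compare against, and your attempt must be judged as a free-standing argument. Your reduction steps are sound and do mesh with the paper's toolkit: splitting $\Z/n\cong\Z/m\times\Z/m'$ by CRT, invoking Proposition~\ref{prop:S-ring} and the non-Schurian direct-product proposition to propagate non-Schurianity from the block on $\Z/m$ to all of $\Z/n$, and correctly observing (via Theorem~\ref{th:schurian}) that the \emph{ordinary} wreath product of Schurian cyclotomic layers cannot work, so a generalized wreath product is forced.

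The genuine gap is that the entire mathematical content of the theorem is in the two steps you defer: (i) the explicit choice of the generalized wreath product data (the section $U/L$ of $C$, the two cyclotomic layers, and why the two-plus-two split $\{p_1,p_2\}$ versus $\{p_3,p_4\}$ with the coprimality condition is what makes the gluing an $S$-ring), and (ii) the verification that the resulting $S$-ring admits no transitive group of relation-preserving automorphisms when $d>2$. You describe (ii) only as a heuristic ``synchronization mismatch'' and then state that ``this is exactly the content of the Evdokimov--Ponomarenko analysis,'' which is circular: that analysis \emph{is} the theorem you are asked to prove. Without a concrete construction and a concrete obstruction (e.g.\ exhibiting a basic relation on which $\Aut(\A|I)$ provably fails to act transitively, per Lemma~\ref{lem:X-I}), the argument does not close. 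A secondary inaccuracy: since only $\{p_1,p_2\}\cap\{p_3,p_4\}=\emptyset$ is assumed, $p_1=p_2$ is allowed, in which case $C_{12}\cong\Z/p_1^2$ and your claim that $\Aut(C)$ has exponent exactly $d=\lcm(p_i-1)$ fails (the exponent acquires a factor of $p_1$); this does not destroy the existence of a cyclic multiplier group of order $d\ge 3$, but it shows the model case $m=p_1p_2p_3p_4$ square-free does not cover the stated hypotheses without additional care.
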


As another example, Hanaki-Hirai-Ponomarenko\cite{HANAKI-NONSCHUR} proved
a generalization of Wielandt's construction:
\begin{theorem}\label{th:hanaki}
Let $p$ be a prime.
Let $G$ be an elementary abelian $p$-group
of even rank except for the orders $2^2$, $3^2$, and $2^4$.
Then $G$ has a non-Schurian $S$-ring. 
\end{theorem}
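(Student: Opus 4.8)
The plan is to deduce the theorem from a short list of base cases, exploiting that the non-Schurian property propagates through direct products, and then to settle the base cases by an explicit Wielandt-type construction.

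First I would set up the reduction. Writing $G \cong (\Z/p)^{2m}$, observe that a direct product of elementary abelian $p$-groups is again elementary abelian of the summed rank, that by Proposition~\ref{prop:S-ring} a direct product of $S$-rings is an $S$-ring on the product group, and that by the proposition preceding Theorem~\ref{th:ev} the direct product of a non-Schurian $S$-ring with any $S$-ring (for instance the discrete one, in which each element is its own class) is again non-Schurian. It therefore suffices to exhibit a single non-Schurian $S$-ring on the minimal admissible group for each prime, namely
$$
(\Z/p)^2 \quad (p \ge 5), \qquad (\Z/3)^4, \qquad (\Z/2)^6.
$$
Indeed, every admissible even rank is obtained from one of these by adding an even rank: for $p \ge 5$ all even ranks arise as $2 + 2j$, for $p = 3$ all ranks $\ge 4$ as $4 + 2j$, and for $p = 2$ all ranks $\ge 6$ as $6 + 2j$, which is exactly the list remaining after deleting the excluded orders $2^2$, $2^4$, and $3^2$.

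For the base cases I would build an explicit Wielandt-type $S$-ring. Viewing the group as the additive group of an $\F_p$-vector space $V$, one starts from the cyclotomic (affine) $S$-ring whose relations are the orbits on $V \setminus \{e\}$ of a well-chosen linear subgroup $H \le \GL(V)$, and then fuses selected orbits. The fusion must be engineered so that the resulting surjection $r \colon V \to I$ still satisfies the three axioms of Definition~\ref{def:S-ring} — it contains $\{e\}$ as a class, is closed under the group product in $\C[V]$, and is closed under inversion (negation) — while acquiring strictly more combinatorial symmetry than comes from any transitive group action.

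The verification that these base $S$-rings are non-Schurian is the main obstacle. By Proposition~\ref{prop:equivalence} the $S$-ring is Schurian precisely when its associated Cayley association scheme is, so I would invoke the criterion of Lemma~\ref{lem:X-I}: the scheme is Schurian if and only if the subgroup of automorphisms acting trivially on $I$ is transitive on each relation $R^{-1}(i)$. The task is then to compute this automorphism group and to exhibit one relation on which that subgroup fails to be transitive; this is the Wielandt phenomenon, in which the symmetry of the fused partition strictly exceeds the symmetry available from any group acting regularly on $V$. The delicate point, and what forces the three exceptions, is that the same analysis must come out the other way in the orders $2^2$, $3^2$, and $2^4$: there the candidate fusions either fail closure under the $S$-ring axioms or collapse back to Schurian schemes because the relevant linear group is already transitive on every fused relation. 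Establishing the two larger sporadic base cases $(\Z/3)^4$ and $(\Z/2)^6$ — where a clean structural argument using the relevant symplectic or orthogonal group, or else a finite computer-algebra check, is likely unavoidable — is the heart of the matter; once the base cases are in hand, the direct-product reduction of the first step carries the conclusion to every remaining even-rank group.
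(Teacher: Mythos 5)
The paper does not prove this statement at all: it is quoted from Hanaki--Hirai--Ponomarenko \cite{HANAKI-NONSCHUR} (as a generalization of Wielandt's construction) and is used only as a source of seed examples to feed into the wreath-product and direct-product machinery of Section~\ref{sec:const-non-sch}. So there is no internal proof to compare against, and your proposal must stand on its own.

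Your reduction step is sound and uses only tools the paper actually provides: the proposition following Proposition~\ref{prop:S-ring} (a consequence of Proposition~\ref{prop:equivalence}, Proposition~\ref{prop:schurian} and Theorem~\ref{th:schurian}) does allow you to take the direct product of a non-Schurian $S$-ring on $(\Z/p)^2$, $(\Z/3)^4$ or $(\Z/2)^6$ with the thin $S$-ring on $(\Z/p)^{2j}$ and remain non-Schurian on the elementary abelian group of the summed rank, and your bookkeeping of which even ranks are reached matches the three excluded orders exactly. The genuine gap is that none of the base cases is actually established. For $(\Z/p)^2$ with $p\ge 5$ you could at least invoke Wielandt \cite[Theorem~26.4]{WIELANDT}, but for $(\Z/3)^4$ and $(\Z/2)^6$ you exhibit no explicit partition $r\colon G\to I$, verify none of the three axioms of Definition~\ref{def:S-ring} for it, and give no argument that the subgroup $\Aut(X|I)$ of Lemma~\ref{lem:X-I} fails to be transitive on some relation class --- you defer precisely this to ``a clean structural argument \ldots or else a finite computer-algebra check.'' Since those two sporadic groups are exactly what separates the theorem from Wielandt's classical result (and exactly why the exceptions $2^2$, $3^2$, $2^4$ appear), what you have is a correct reduction wrapped around an unproven core rather than a proof; to complete it you would need to import or reproduce the explicit constructions and non-Schurianity verifications from \cite{HANAKI-NONSCHUR}.
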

Starting from these examples, by taking the wreath product with
any $S$-rings or association schemes,
we have a large family of non-Schurian $S$-rings
and non-Schurian association schemes.
\section{Iterated product and profinite association schemes}
\label{sec:profin}

A special case of an iterated wreath product 
is implicitly used in the construction of the kernel scheme
by Martin-Stinson\cite{MARTIN-STINSON}
(the notation here follows \cite{MOO}).
\begin{definition}\label{def:kernel}
Let $n$ be a positive integer, and $V$ a finite set of 
alphabet with cardinality $v\geq 2$. Let $X_n$ be $V^n$,
and $I_n:=\{1,2,\ldots,n\}\cup\{\infty\}$. (We use $\infty$
instead of a natural notation $n+1$, since this is $i_0$ and
to be distinguished when considering a projective system
in the next section.)
Define $R_n:X_n\times X_n \to I_n$ as follows.
Let $x=(x_1,x_2,\ldots,x_n)$ and $y=(y_1,y_2,\ldots,y_n)$ be
elements of $X_n$. Let $R(x,y)$ be the smallest index $i$
for which $x_i\neq y_i$. If $x=y$, then $R(x,y)=\infty$.
This is a symmetric (and hence commutative) 
association scheme, with $R^{-1}(\infty)$ being the identity
relation. This is called a kernel scheme, and denoted by
$\overrightarrow{k(n,v)}$.
\end{definition}
\begin{definition}
Let $X$ be an association scheme. 
We define inductively its wreath power for $n \in \N$ by:
\begin{itemize}
 \item $X^{\lexprod 1}:=X$.
 \item $X^{\lexprod n}:=(X^{\lexprod {n-1}})\lexprod X$ for $n\geq 2$.
\end{itemize}
\end{definition}

\begin{definition}\label{def:H1}
Let $v\geq 2$ be an integer. Let $H(1,v)$ be the class-one
association scheme of size $v$, namely, the unique association 
scheme with $I=\{i_0,i_1\}$ with $\#X=v$ (the notation 
follows Delsarte\cite[\S 4.1.1]{DELSARTE}).
\end{definition} 

\begin{proposition}\label{prop:kernel-prod}
\[
\overrightarrow{k(n,v)}=H(1,v)^{\lexprod n}.
\] 
\end{proposition}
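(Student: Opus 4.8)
The plan is to induct on $n$, using the recursive definition $H(1,v)^{\lexprod n}=(H(1,v)^{\lexprod(n-1)})\lexprod H(1,v)$. The base case $n=1$ is a direct comparison: both $\overrightarrow{k(1,v)}$ and $H(1,v)$ have an underlying set of cardinality $v$, and by Definition~\ref{def:kernel} we have $R_1(x,y)=\infty$ exactly when $x=y$ and $R_1(x,y)=1$ otherwise, which matches the relation of $H(1,v)$ (Definition~\ref{def:H1}) under the relabelling $\infty\leftrightarrow i_0$, $1\leftrightarrow i_1$. For the inductive step I assume $\overrightarrow{k(n-1,v)}\cong H(1,v)^{\lexprod(n-1)}$. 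Since the wreath product is functorial in its arguments (an isomorphism $(f_0,\sigma_0)$ induces the isomorphism $(f_0\times\id_V,\ \sigma_0\lexprod\id)$ via Definition~\ref{def:functor}, using that $\sigma_0$ preserves $i_0$), it then suffices to exhibit an isomorphism $\overrightarrow{k(n,v)}\cong\overrightarrow{k(n-1,v)}\lexprod H(1,v)$.

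To build this one-step isomorphism I would take the underlying bijection $f:V^{n-1}\times V\to V^n$ that flattens $((x_1,\dots,x_{n-1}),x_n)\mapsto(x_1,\dots,x_n)$. On index sets, Definition~\ref{def:lexI} gives $I_{\overrightarrow{k(n-1,v)}}\lexprod I_{H(1,v)}=\{1,\dots,n-1\}\coprod\{i_0,i_1\}$, and I define $\sigma$ to be the identity on $\{1,\dots,n-1\}\subset I_n$ together with $i_0\mapsto\infty$ and $i_1\mapsto n$. Here I use that the base point $i_0$ of the wreath product is the base point $i_0$ of the rear factor $H(1,v)$, which corresponds to the identity relation $\infty$ of $\overrightarrow{k(n,v)}$, so $\sigma$ indeed preserves $i_0$.

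The verification amounts to checking the commuting square of Definition~\ref{def:cat}, namely $R_n\circ(f\times f)=\sigma\circ R_\lexprod$. By the definition of $\lex$ and of $R_\lexprod$ in Definition~\ref{def:lex}, the value $R_\lexprod$ on a pair equals the smallest index $j\in\{1,\dots,n-1\}$ at which the first $n-1$ coordinates differ, when such a $j$ exists; otherwise it equals $R_{H(1,v)}(x_n,y_n)\in\{i_0,i_1\}$. Applying $\sigma$ and comparing with the smallest-differing-index rule defining $R_n$ yields agreement in each of the three cases: a first difference among coordinates $1,\dots,n-1$ (where $\sigma$ acts as the identity and $R_n=j$); all of the first $n-1$ coordinates equal but the $n$-th differing ($i_1\mapsto n=R_n$); and complete equality ($i_0\mapsto\infty=R_n$).

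The main obstacle is purely a bookkeeping hazard of keeping the two conventions aligned: the lexicographic wreath product reads the more significant (front) factor first, while in the iterated product $\overrightarrow{k(n-1,v)}\lexprod H(1,v)$ the recursively built front factor carries coordinates $1,\dots,n-1$ and the newly adjoined copy of $H(1,v)$ carries the least significant coordinate $n$. Thus ``front'' corresponds to the smallest index, which is exactly the ``smallest $i$ with $x_i\neq y_i$'' convention of the kernel scheme; once this correspondence is pinned down, no further difficulty arises and the verification is routine.
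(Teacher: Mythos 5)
Your proof is correct and follows essentially the same route as the paper's: induction on $n$, with the key observation that the ``look at the front factor first, then the rear'' rule defining the wreath product matches the ``smallest differing index'' rule of the kernel scheme. The paper leaves this as a sketch, whereas you spell out the base case, the one-step isomorphism $\overrightarrow{k(n,v)}\cong\overrightarrow{k(n-1,v)}\lexprod H(1,v)$, and the relabelling of index sets explicitly; the content is the same.
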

\begin{proof}
This can be proved in a straightforward manner by induction on $n$.
The definition of $X\lexprod Y$ is 
``to determine the relation of $(x,y)$ and $(x',y')$,
first look at the $X$-component; if $x\neq x'$ then 
the relation is decided by them in $X$. If not,
then the relation is decided by those of $y$ and $y'$.''
This is compatible with the definition of the kernel schemes.
\end{proof}
Remark that in Martin-Stinson \cite{MARTIN-STINSON}, 
the kernel schemes are shown to be association schemes by computing the
intersection numbers. The above proposition gives
another proof.

\subsection{Profinite association schemes}
One of the motivations of this study 
is to construct a projective system of
non-Schurian association schemes. This section follows 
Matsumoto-Ogawa-Okuda\cite{MOO}.
Proofs of the statements are given there.
\begin{proposition}\label{prop:profin}
Let $X$ be an association scheme. Let $A_X$
be its Bose-Mesner algebra. We define 
a convolution product $\bullet$ on $A_X$ as a normalization 
of the matrix product
$$
A\bullet B:= \frac{1}{\#X}AB.
$$
\begin{enumerate}
\item
Let $p: X \to X'$ be a surjective morphism of 
association schemes. Through the identification 
of $A_X=C(I_X)$, we have a canonical 
injection $\Psi:A_{X'} \hookrightarrow A_X$ 
by $C(I_{X'}) \hookrightarrow C(I_X)$.
Then, $\Psi$ preserves Hadamard product, Hadamard unit,
and the convolution product $\bullet$
(does not preserve the convolution unit if $\#X>\#X'$).
\item
Suppose that $X$ and $X'$ are commutative.
Then, the set of primitive idempotents of $A_X$ 
with respect to the convolution product
is naturally identified with that to
the matrix product (and hence with $J(X)$).
The former set is obtained by multiplying
each element of $J(X)$ by $\#X$. From 
now on, $J(X)$ means the set of primitive 
idempotents with respect to $\bullet$.
\item\label{enum:parto}
An element of $J(X')$ is mapped by $\Psi$ to a non-zero idempotent
in $A_X$, and thus a non-empty sum of elements of $J(X)$.
For distinct elements of $J(X')$, the corresponding
non-empty subsets of $J(X)$ have no intersection.
This gives a one-to-many (and non-empty) 
correspondence $J(X') \to J(X)$, in other words, a partial
surjection from $J(X)$ to $J(X')$, denoted by $J(X) \parto J(X')$.
\end{enumerate}

\end{proposition}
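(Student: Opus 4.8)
The plan is to make the abstract injection $\Psi$ completely concrete and then reduce every assertion to a short matrix computation. First I would identify $\Psi$ with pullback of matrices along the underlying map $f$. Writing the morphism as $(f,\sigma)$ with $\sigma\circ R_X=R_{X'}\circ(f\times f)$, one checks for an adjacency matrix $A_{i'}$ of $X'$ that the matrix $(x_1,x_2)\mapsto A_{i'}(f(x_1),f(x_2))$ equals $\sum_{i\in\sigma^{-1}(i')}A_i$, which is exactly the image of the indicator of $i'$ under $C(I_{X'})\hookrightarrow C(I_X)$. Thus $\Psi$ is this pullback $f^{*}$, and injectivity is immediate from the surjectivity of $\sigma$. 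The Hadamard assertions in (1) are then one line each: $f^{*}$ acts componentwise, so it is a homomorphism for $\circ$, and it sends $\bJ_{X'}$ to $\bJ_X$, the Hadamard unit.

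The heart of (1) is the convolution claim, and the step I expect to be the real obstacle is that a surjective morphism of association schemes has fibres of constant size $m:=\#X/\#X'$. I would extract this from the structure itself: since $i_0'$ is the identity relation of $X'$, the pullback $f^{*}(\bI_{X'})$ is the $0/1$ matrix detecting $f(x_1)=f(x_2)$, and it equals $\sum_{i\in\sigma^{-1}(i_0')}A_i$, whose row sums are the constant $\sum_{i\in\sigma^{-1}(i_0')}k_i$; hence every fibre has this common cardinality $m$. Granting equidistribution, for $M,N\in A_{X'}$ I would group the matrix-product sum $\sum_{x_2\in X}M(f(x_1),f(x_2))N(f(x_2),f(x_3))$ over the fibres of $f$, each contributing $m$ equal terms, to obtain
\[
(f^{*}M)(f^{*}N)=m\,f^{*}(MN),\qquad m=\#X/\#X'.
\]
Dividing by $\#X$ and using $m/\#X=1/\#X'$ gives $f^{*}(M)\bullet f^{*}(N)=f^{*}(M\bullet N)$. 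The same display shows $f^{*}(\bI_{X'})=\sum_{i\in\sigma^{-1}(i_0')}A_i\neq\bI_X$ when $m>1$, so the convolution unit $\#X'\,\bI_{X'}$ is not sent to $\#X\,\bI_X$, recording the parenthetical failure.

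For (2) I would observe that $\bullet$ is the matrix product rescaled by $1/\#X$, and that $a\mapsto\#X\cdot a$ is a ring isomorphism $(A_X,\cdot)\to(A_X,\bullet)$. Consequently idempotence, orthogonality, and primitivity are all transported by $E\mapsto\#X\cdot E$; since $(A_X,\cdot)\cong\prod_{j}\C$ by commutativity, the $\bullet$-primitive idempotents are exactly $\{\#X\cdot E_j\}$, yielding the claimed identification with $J(X)$.

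Finally (3) is formal once (1) and (2) are in hand. Because $f^{*}$ preserves $\bullet$ and is injective, each primitive $\bullet$-idempotent $E'\in J(X')$ maps to a nonzero $\bullet$-idempotent $f^{*}(E')$ of $A_X$; in the semisimple algebra $(A_X,\bullet)\cong\prod_{j}\C$ it is therefore a sum over a nonempty subset $S(E')\subseteq J(X)$ of primitive idempotents. For distinct $E_1',E_2'\in J(X')$ we have $E_1'\bullet E_2'=0$, whence $f^{*}(E_1')\bullet f^{*}(E_2')=f^{*}(E_1'\bullet E_2')=0$, and orthogonality of idempotents in $\prod_{j}\C$ forces $S(E_1')\cap S(E_2')=\emptyset$. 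Sending each $F\in S(E')$ to $E'$ is then a well-defined partial map $J(X)\parto J(X')$ whose image is all of $J(X')$, since every $S(E')$ is nonempty; this is the asserted partial surjection.
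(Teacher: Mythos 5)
The paper does not actually prove Proposition~\ref{prop:profin}; it defers all proofs in this section to \cite{MOO}, so there is no in-paper argument to compare against. Your proposal is, however, a correct and self-contained proof. The identification of $\Psi$ with the pullback $f^{*}$ (via $\sigma\circ R_X=R_{X'}\circ(f\times f)$), the componentwise nature of the Hadamard claims, the rescaling isomorphism $(A_X,\cdot)\cong(A_X,\bullet)$ for part (2), and the orthogonality argument for part (3) are all sound. The one step that genuinely needs an idea --- that all fibres of $f$ have the common size $\#X/\#X'$, without which the convolution compatibility fails --- you handle correctly and economically by noting that $f^{*}(\bI_{X'})=\sum_{i\in\sigma^{-1}(i_0)}A_i$ is the indicator of ``same fibre'' and has constant row sums $\sum_{i\in\sigma^{-1}(i_0)}k_i$ because each $A_i$ does; surjectivity of $f$ then makes every fibre appear as a row sum. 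The fibre-grouping computation $(f^{*}M)(f^{*}N)=m\,f^{*}(MN)$ and the cancellation $m/\#X=1/\#X'$ are exactly what the normalization $\bullet$ is designed for, and your observation that $f^{*}$ sends the convolution unit $\#X'\,\bI_{X'}$ to $\#X'\sum_{i\in\sigma^{-1}(i_0)}A_i\neq\#X\,\bI_X$ when $\#X>\#X'$ accounts for the parenthetical remark. No gaps.
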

\begin{definition}
Let $\Lambda$ be a directed ordered set, namely,
a partial ordered set where any two elements have an upper bound.
A profinite association scheme $(X_\lambda)_{\lambda\in \Lambda}$
is a projective system
of association schemes with surjective morphisms, namely:
\begin{enumerate}
\item
A family of association schemes $(R_\lambda, X_\lambda, I_\lambda)$
for $\lambda\in \Lambda$. 
\item
For any $\lambda \geq \mu \in \Lambda$, a surjective morphism
$p_{\lambda,\mu}:X_\lambda \to X_\mu$ is specified.
\item For any $\lambda$, $p_{\lambda,\lambda}=\id_{X_\lambda}$.
\item For any $\lambda \geq \mu \geq \nu$,
$$
p_{\lambda,\nu}=p_{\mu,\nu}\circ p_{\lambda,\mu}.
$$
\end{enumerate}
We define its underlying set by 
$$
\Xhat=\varprojlim X_\lambda,
$$
and the set of relations by
$$
\Ihat=\varprojlim I_\lambda,
$$
in the category of sets.
Then, $\Xhat$ and $\Ihat$ have natural (profinite)
topologies, where $X_\lambda$ and $I_\lambda$ are
finite sets with the discrete topology.
If every $X_\lambda$ is commutative, we have a 
projective system of partial surjections
of $J_\lambda:=J(X_\lambda)$. We define
$$
\Jhat =\varprojlim J_\lambda,
$$
which is proved to have a discrete topology.
We define the Bose-Mesner algebra of $(X_\lambda)_{\lambda \in\Lambda}$
as the inductive limit
$$
A_{\Xhat}:=\varinjlim A_{X_\lambda},
$$
which has Hadamard product with unit, and the convolution 
product (without unit if $\Jhat$ is infinite). 
It has a linear basis $\Jhat$ consisting
of all the primitive idempotents, and is isomorphic, as a 
ring with Hadamard product, to the space of 
locally constant functions on $\Ihat$.
For subsets $J_D\subset \Jhat$, $I_C\subset \Ihat$,
and a finite multi-subset $Y\subset \Xhat$,  
a property ``$I_C$-free code and $J_D$-design''
of $Y$ is defined.
\end{definition}

\subsection{Iterated wreath products}
Iterated wreath products give examples of 
profinite association schemes. 
We begin with preparation.
\begin{lemma} The wreath product is associative, i.e.,
there is a canonical isomorphism
$$
(X\lexprod Y)\lexprod Z \to X\lexprod (Y\lexprod Z)
$$
for association schemes $X,Y$ and $Z$.
Hence we may write
$$
X\lexprod Y\lexprod Z.
$$
\end{lemma}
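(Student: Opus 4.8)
The plan is to exhibit an explicit pair of bijections—one on underlying sets, one on relation sets—and verify that it constitutes an isomorphism in the category of Definition~\ref{def:cat}. First I would use the associativity of the Cartesian product of sets to define the canonical bijection
$$
\phi:(X\times Y)\times Z \to X\times(Y\times Z), \quad ((x,y),z)\mapsto (x,(y,z))
$$
on underlying sets. For the relation sets, unwinding Definition~\ref{def:lexI} and remembering that the base point of $I_X\lexprod I_Y$ is the $i_0$ of $I_Y$, both $(I_X\lexprod I_Y)\lexprod I_Z$ and $I_X\lexprod(I_Y\lexprod I_Z)$ are canonically identified with the iterated coproduct
$$
(I_X\setminus\{i_0\})\coprod (I_Y\setminus\{i_0\})\coprod I_Z,
$$
and this common identification supplies a canonical bijection $\psi$ between them.

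Next I would check that $(\phi,\psi)$ satisfies the commutativity of diagram~(\ref{eq:comm}), that is, $\psi\circ R_{(X\lexprod Y)\lexprod Z}=R_{X\lexprod(Y\lexprod Z)}\circ(\phi\times\phi)$. The key observation is that, by the definition of $R_\lexprod$ in Definition~\ref{def:lex}, each of the two composite relation maps assigns to a pair of points the relation determined by the first coordinate (in the order $X,Y,Z$) at which the two points differ, returning $i_0$ only when they agree in all three coordinates; this is exactly the lexicographic rule used for the kernel scheme in Definition~\ref{def:kernel}. Carrying out the case analysis—$x\neq x'$; else $y\neq y'$; else $z\neq z'$; else equal—shows that both sides produce the same element of the common iterated coproduct, so the diagram commutes.

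Finally, since $\phi$ and $\psi$ are bijections, and the same case analysis applied to $\phi^{-1}$ and $\psi^{-1}$ shows that the inverse pair is again a morphism, $(\phi,\psi)$ is an isomorphism of association schemes. The one point requiring care is the bookkeeping of base points: I must verify that the two ways of iterating $\lexprod$ delete the same $i_0$'s from the coproduct—namely the $i_0$ of $I_X$ and the $i_0$ of $I_Y$, while retaining all of $I_Z$—so that $\psi$ is genuinely a base-point-preserving bijection compatible with the base-point preservation built into morphisms (Definition~\ref{def:cat}). Once this identification is pinned down, the remaining verification of relation-map compatibility is the routine lexicographic case check.
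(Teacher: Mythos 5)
Your proposal is correct and takes essentially the same approach as the paper: the same identification of both iterated index sets with $(I_X\setminus\{i_0\})\coprod(I_Y\setminus\{i_0\})\coprod I_Z$ (including the observation that the base point of $I_X\lexprod I_Y$ is the $i_0$ of $I_Y$), followed by checking compatibility of the relation maps. The paper expresses that last check as the commutativity of a square of $\lex$ maps rather than your element-wise lexicographic case analysis, but the content is the same.
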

\begin{proof}
This follows from the identification
\begin{eqnarray*}
(I_X\lexprod I_Y)\lexprod I_Z 
&=&([(I_X\setminus \{i_0\})\coprod I_Y] \setminus \{i_0\}) \coprod I_Z \\
&=& (I_X\setminus \{i_0\})\coprod (I_Y \setminus \{i_0\}) \coprod I_Z \\
&=& I_X\lexprod (I_Y\lexprod I_Z)
\end{eqnarray*}
and the commutativity of 
\[
    \begin{aligned}
      \begin{tikzpicture}[auto]
      \node (a1) at (0,2) {$I_X\times I_Y \times I_Z$};
      \node (a2) at (0,0) {$(I_X\lexprod I_Y)\times I_Z$};
      \node (b1) at (5,2) {$I_X\times (I_Y\lexprod I_Z)$};
      \node (b2) at (5,0) {$(I_X\lexprod I_Y)\lexprod I_Z=I_X\lexprod (I_Y\lexprod I_Z)$.};
      \draw[->](a1) to node[swap] {$\lex \times \id$} (a2);
      \draw[->](a1) to node {$\id \times \lex$} (b1);
      \draw[->](a2) to node {$\lex$} (b2);
      \draw[->](b1) to node {$\lex$} (b2);
      \end{tikzpicture}
    \end{aligned}
\]
\end{proof}

\begin{lemma}\label{lem:image-A}
Let $X$ and $Y$ be association schemes.
The projection $\pi:X \lexprod Y \to X$
given in Proposition~\ref{prop:proj} induces an
injection 
$$
A_X \to A_{X\lexprod Y}, \quad A \mapsto A \otimes \bJ_Y.
$$
\end{lemma}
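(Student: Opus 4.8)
The plan is to unwind the definitions from Proposition~\ref{prop:proj} and Lemma~\ref{lem:image-A}. The morphism $\pi:X\lexprod Y\to X$ is the pair $(p,\pi_{I_X})$ where $p:X\times Y\to X$ is the projection and $\pi_{I_X}:I_X\lexprod I_Y\to I_X$ collapses $I_Y$ to $i_0$. Part~(1) of Proposition~\ref{prop:profin} tells us that any surjective morphism of association schemes induces a map $\Psi:A_X\hookrightarrow A_{X\lexprod Y}$ via the inclusion $C(I_X)\hookrightarrow C(I_X\lexprod I_Y)$ coming from $\pi_{I_X}$. So the only real content here is to identify this abstract $\Psi$ with the concrete formula $A\mapsto A\otimes\bJ_Y$ and to confirm injectivity.

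First I would recall that $A_X$ is spanned by the adjacency matrices $A_{i_X}$ for $i_X\in I_X$, so it suffices to check the formula on these basis elements and extend by linearity. For $i_X=i_0$ we have $A_{i_0}=\bI_X$, which should map to $\bI_X\otimes\bJ_Y$; for $i_X\neq i_0$ the matrix $A_{i_X}$ should map to $A_{i_X}\otimes\bJ_Y$. The key observation is that $\Psi$ sends the indicator function of $i_X\in I_X$ to the indicator function of its preimage $\pi_{I_X}^{-1}(i_X)\subseteq I_X\lexprod I_Y$. For $i_X\neq i_0$ this preimage is the single relation $i_X\in I_X\setminus\{i_0\}\subset I_X\lexprod I_Y$, whose adjacency matrix in $X\lexprod Y$ is exactly the front-type matrix $A_{i_X}\otimes\bJ$ by Theorem~\ref{th:main}(2). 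For $i_X=i_0$ the preimage is all of $I_Y$, so $\Psi(\bI_X)=\sum_{i_Y\in I_Y}(\bI\otimes A_{i_Y})=\bI\otimes\bJ$, again matching $\bI_X\otimes\bJ_Y$ since $\sum_{i_Y}A_{i_Y}=\bJ$. This is the same sum identity already used in the proof of Theorem~\ref{th:main}.

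With the formula identified on generators, linearity gives $\Psi(A)=A\otimes\bJ_Y$ for every $A\in A_X$. Injectivity is then immediate: the front-type matrices $A_{i_X}\otimes\bJ$ for $i_X\neq i_0$ together with $\bI\otimes\bJ$ are linearly independent in $A_{X\lexprod Y}$ (being distinct sums of the linearly independent adjacency matrices listed in Theorem~\ref{th:main}), so the images of the basis $\{A_{i_X}\}_{i_X\in I_X}$ remain linearly independent. Alternatively, one sees directly that $A\otimes\bJ_Y=0$ forces $A=0$.

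I do not expect a genuine obstacle here; the statement is essentially a bookkeeping translation between the categorical map $\Psi$ of Proposition~\ref{prop:profin}(1) and the explicit tensor description of the Bose--Mesner algebra. The one point requiring a little care is the $i_0$ case, where the preimage under $\pi_{I_X}$ is not a single relation but the whole of $I_Y$, so the matching of $\Psi(\bI_X)$ with $\bI_X\otimes\bJ_Y$ relies on the valency identity $\sum_{i_Y}A_{i_Y}=\bJ$ rather than on a direct correspondence of relations. Once this is noted, the rest is routine.
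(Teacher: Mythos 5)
Your proof is correct and follows essentially the same route as the paper: identify the induced map as pullback along $\pi_{I_X}$, evaluate on the adjacency-matrix basis using $\sum_{i_Y}A_{i_Y}=\bJ$, and extend by linearity. The only cosmetic difference is that the paper computes the preimage of $i_X$ uniformly in $I_X\times I_Y$ via Proposition~\ref{prop:direct}, whereas you split into the cases $i_X=i_0$ and $i_X\neq i_0$ inside $I_X\lexprod I_Y$ using Theorem~\ref{th:main}; you also spell out the injectivity that the paper leaves implicit.
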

\begin{proof}
Because of the definition of 
$$
\pi_{I_X}:I_X \lexprod I_Y \to I_X
$$
in Definition~\ref{def:functor}, the preimage of 
$i_X \in I_X$ in $I_X\times I_Y$ is 
$$
\{(i_X,i_Y) \mid i_Y \in I_Y\},
$$
and hence the image of $A_{i_X}\in A_X$ in $A_{X\lexprod Y}$
is 
$$
\sum_{i_Y\in I_Y}A_{i_X} \otimes A_{i_Y}=A_{i_X}\otimes \bJ_Y
$$
by Proposition~\ref{prop:direct}.
Since $A_X$ is the linear span of $A_{i_X}$, 
the statement follows.
\end{proof}

\begin{proposition}\label{prop:finite}
Let $X_1,X_2,\ldots,X_n$ be any sequence of 
association schemes. Let $I_1,I_2,\ldots,I_n$ be
the set of their relations.
Then, their wreath product
$X_1\lexprod X_2\lexprod X_3 \lexprod \cdots \lexprod X_n$
has underlying set
$$
X_1\times \cdots \times X_n
$$
and the set of relations
$$
I_1\lexprod I_2\lexprod I_3\lexprod \cdots \lexprod I_n
= (I_1\setminus\{i_0\})\coprod(I_2\setminus\{i_0\})\coprod
\cdots\coprod(I_{n-1}\setminus\{i_0\})\coprod I_n.
$$
\end{proposition}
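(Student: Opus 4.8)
The plan is to argue by induction on $n$. The case $n=1$ is trivial, since the product is $X_1$ itself, with relation set $I_1$ (the right-hand side of the asserted formula reduces to $I_n=I_1$ when $n=1$); the case $n=2$ is exactly Definition~\ref{def:lex} together with Definition~\ref{def:lexI}, which record that the underlying set of $X_1\lexprod X_2$ is $X_1\times X_2$ and its relation set is $(I_1\setminus\{i_0\})\coprod I_2$. For the inductive step I would invoke the associativity lemma proved above to write
$$
X_1\lexprod\cdots\lexprod X_n=(X_1\lexprod\cdots\lexprod X_{n-1})\lexprod X_n,
$$
thereby reducing the $n$-fold product to a single wreath product of the $(n-1)$-fold product $A:=X_1\lexprod\cdots\lexprod X_{n-1}$ with $X_n$.

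For the underlying set, Definition~\ref{def:lex} gives that the underlying set of $A\lexprod X_n$ is the product of the two underlying sets, so by the induction hypothesis it equals $(X_1\times\cdots\times X_{n-1})\times X_n=X_1\times\cdots\times X_n$, as claimed. The relation set requires slightly more care: by Definition~\ref{def:lexI}, the relation set of $A\lexprod X_n$ is $(I_A\setminus\{i_0\})\coprod I_n$, where $I_A$ is the relation set of $A$, and by the induction hypothesis
$$
I_A=(I_1\setminus\{i_0\})\coprod\cdots\coprod(I_{n-2}\setminus\{i_0\})\coprod I_{n-1}.
$$

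It then remains to check that deleting $i_0\in I_A$ removes the base point from the trailing copy of $I_{n-1}$, converting it into $I_{n-1}\setminus\{i_0\}$. This is the one point requiring attention, and I expect it to be the main (indeed the only) obstacle: I must verify that the base point $i_0$ of a wreath product $U\lexprod V$ always coincides with the base point $i_0$ of the rear factor $V$. This follows from Theorem~\ref{th:main}, since the identity matrix of $U\lexprod V$ is $\bI\otimes A_{i_0}$, namely the rear-type matrix indexed by $i_0\in I_V$. Applying this fact inductively, the base point of $A=X_1\lexprod\cdots\lexprod X_{n-1}$ is precisely the copy of $i_0$ inside the last factor $I_{n-1}$, so that
$$
I_A\setminus\{i_0\}=(I_1\setminus\{i_0\})\coprod\cdots\coprod(I_{n-2}\setminus\{i_0\})\coprod(I_{n-1}\setminus\{i_0\}),
$$
and forming $(I_A\setminus\{i_0\})\coprod I_n$ yields the stated formula. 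Everything apart from this tracking of the base point through the iterated coproduct is a direct unwinding of the two definitions via associativity.
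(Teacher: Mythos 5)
Your proof is correct and follows essentially the same route as the paper, which simply states that the relation set ``follows by induction from Definition~\ref{def:lexI}''; your explicit verification that the base point of $U\lexprod V$ is the $i_0$ of the rear factor (via the identity matrix being the rear-type matrix $\bI\otimes A_{i_0}$) is exactly the detail the paper leaves implicit, and it is handled correctly.
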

\begin{proof}
The structure of the underlying set follows by definition.
The structure of the set of relations
follows by induction from Definition~\ref{def:lexI}. 
\end{proof}

\begin{proposition}\label{prop:lim}
Let $X_1,X_2,\ldots,$ be an infinite
series of association schemes.
Then, the series of the wreath products 
$$
(X_1\lexprod\cdots\lexprod X_n)_{n\in \N_{>0}}
$$
form a projective system of association schemes.
The mappings of the underlying sets are given by 
projections
$$
X_1\times \cdots \times X_n \to X_1\times \cdots \times X_{n-1}.
$$
The mappings of the sets of relations
$$
(I_1\setminus\{i_0\})\coprod
\cdots\coprod(I_{n-1}\setminus\{i_0\})\coprod I_n
\to
(I_1\setminus\{i_0\})\coprod
\cdots\coprod(I_{n-2}\setminus\{i_0\})\coprod I_{n-1}
$$
are given by mapping the elements in $I_n$
to the $i_0$ in $I_{n-1}$.

The projective limit of the underlying set is
the direct product (with direct product topology, hence
compact and Hausdorff)
$$
\Xhat=\prod_{i=1}^\infty X_i.
$$
The projective limit $\Ihat$ of $I_1\lexprod\cdots\lexprod I_n$
is the one-point compactification of
the discrete topological set
$$
\coprod_{i=1}^\infty(I_i\setminus\{i_0\}).
$$
\end{proposition}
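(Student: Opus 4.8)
The plan is to build the whole projective system out of the binary projection of Proposition~\ref{prop:proj} together with associativity of $\lexprod$. Writing $W_n:=X_1\lexprod\cdots\lexprod X_n$, associativity gives a canonical identification $W_n=W_{n-1}\lexprod X_n$, so Proposition~\ref{prop:proj} supplies a surjective morphism $\pi_n:W_n\to W_{n-1}$. On underlying sets $\pi_n$ is the coordinate projection $X_1\times\cdots\times X_n\to X_1\times\cdots\times X_{n-1}$, and on relation sets it is the map $\pi_{I_{W_{n-1}}}$ of Definition~\ref{def:functor}, which by Proposition~\ref{prop:finite} is the identity on $(I_1\setminus\{i_0\})\coprod\cdots\coprod(I_{n-1}\setminus\{i_0\})$ and collapses all of $I_n$ to $i_0\in I_{n-1}$; this is exactly the two displayed transition maps. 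Setting $p_{n,m}:=\pi_{m+1}\circ\cdots\circ\pi_n:W_n\to W_m$ for $n\ge m$ and $p_{n,n}:=\id$, the compatibilities $p_{n,\nu}=p_{m,\nu}\circ p_{n,m}$ are immediate from the definition as a composite, so $(W_n)_n$ is a projective system of association schemes.

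For the underlying sets, $\varprojlim(X_1\times\cdots\times X_n)$ along the coordinate projections is canonically $\prod_{i=1}^\infty X_i$; since each $X_i$ is finite and hence compact Hausdorff in the discrete topology, Tychonoff's theorem gives compactness and the product of Hausdorff spaces is Hausdorff, yielding the stated $\Xhat$.

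The substantive part is the computation of $\Ihat$. Abbreviating $J_i:=I_i\setminus\{i_0\}$, Proposition~\ref{prop:finite} identifies the $n$-th term with $J_1\coprod\cdots\coprod J_{n-1}\coprod I_n=J_1\coprod\cdots\coprod J_n\coprod\{i_0\}$, and $\pi_n$ is the identity on the blocks $J_1,\ldots,J_{n-1}$ while sending all of $I_n=J_n\coprod\{i_0\}$ to $i_0$. I would analyze a compatible sequence $(a_n)$ as follows. If $a_n\ne i_0$ lies in the block $J_k$ (necessarily $k\le n$), then since $\pi_{n+1}$ restricts to the identity on $J_1,\ldots,J_n$ and sends everything else to $i_0\ne a_n$, the fiber $\pi_{n+1}^{-1}(a_n)$ is the single point $a_n$, forcing $a_m=a_n$ for all $m\ge n$; running backwards, $\pi_k$ sends this common value $\alpha=a_k\in J_k\subset I_k$ to $i_0$, so $a_m=i_0$ for all $m<k$. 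Hence every compatible sequence is either constantly $i_0$ (call this point $\infty$) or is determined by the smallest index $k$ with $a_k\ne i_0$ together with the value $a_k\in J_k$, giving a bijection $\Ihat\cong\left(\coprod_{i=1}^\infty J_i\right)\cup\{\infty\}$.

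Finally I would identify the topology. Each $\alpha\in J_k$ is isolated, since the projection $\Ihat\to W_k$ is continuous and its fiber over $\alpha$ is the single sequence just described, so $\{\alpha\}$ is open. A basic neighborhood of $\infty$ is the fiber over $i_0\in W_n$, which consists of $\infty$ together with all $\alpha\in J_k$ for $k>n$; this is the complement of the finite set $J_1\coprod\cdots\coprod J_n$. Thus the neighborhoods of $\infty$ are exactly the complements of finite (equivalently compact) subsets of the discrete space $\coprod_i J_i$, while every other point is isolated, which is precisely the one-point compactification of $\coprod_{i=1}^\infty(I_i\setminus\{i_0\})$; compactness and Hausdorffness come for free, either as an inverse limit of finite discrete spaces or from the one-point compactification of a discrete, hence locally compact Hausdorff, space. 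I expect the only genuine obstacle to be the bookkeeping in the sequence analysis, namely making precise why a compatible sequence cannot jump into a newly created top block and therefore stabilizes, together with matching the ``isolated points plus one limit point'' topology to the one-point compactification; both are elementary but require care with the indexing of the blocks $J_k$.
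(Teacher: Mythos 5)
Your proposal is correct and follows essentially the same route as the paper: the transition maps come from the binary projection of Proposition~\ref{prop:proj} via associativity, the limit of the underlying sets is the infinite product, and the limit of the relation sets is identified by observing that every point other than the limit of $i_0$ is clopen while the neighborhoods of that limit point are the cofinite sets. Your compatible-sequence analysis merely spells out in more detail what the paper's proof asserts directly.
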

\begin{proof}
The mapping between the underlying set is the projection 
by Proposition~\ref{prop:proj}.
It is a general
fact that the projective limit of finite direct products
is the infinite direct product.

The projective system $(I_1\lexprod\cdots \lexprod I_n)_{n\in \N_{>0}}$ 
is given by mapping the last $I_n$
to the $i_0$ of $I_{n-1}$, by Proposition~\ref{prop:proj}.
We consider its projective limit.
Except for $i_0$, every element in the coproduct
is a clopen point in the projective limit, and the 
set of open neighborhoods of the limit of $i_0$ is
the set of the union of $\{i_0\}$ and 
the complement of a finite set of 
$\coprod_{i=1}^\infty(I_i\setminus\{i_0\})$.
\end{proof}
\begin{proposition}\label{prop:ind-lim}
Suppose that every $X_n$ is commutative in 
Proposition~\ref{prop:lim}. Then, the primitive idempotents
of 
$X_1\lexprod X_2\lexprod X_3\cdots \lexprod X_n$
is
\begin{equation}\label{eq:J-coprod}
J_{X_1}\coprod (J_{X_2}\setminus\{j_0\})\coprod (J_{X_3}\setminus\{j_0\})
\coprod \cdots \coprod (J_{X_n}\setminus\{j_0\}).
\end{equation}
Its inductive limit is
\begin{equation}\label{eq:J-inf}
\Jhat:=J_{X_1}\coprod \coprod_{i=2}^\infty
(J_{X_i}\setminus\{j_0\}).
\end{equation}
\end{proposition}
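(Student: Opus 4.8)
The plan is to prove the two assertions in turn: the finite description~(\ref{eq:J-coprod}) by induction on $n$, and then the limit formula~(\ref{eq:J-inf}) by pinning down the transition maps of the system and taking the increasing union. Throughout I write $W_n:=X_1\lexprod\cdots\lexprod X_n$; since each $X_i$ is commutative and a wreath product of commutative schemes is commutative, every $W_n$ is commutative and $J(W_n)$ is defined.

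For the finite part I would induct on $n$, the base case $n=1$ being the tautology $J(W_1)=J_{X_1}$. For the step, the associativity of $\lexprod$ proved above gives $W_n=W_{n-1}\lexprod X_n$, and Corollary~\ref{cor:lex-J} applied to the commutative schemes $W_{n-1}$ and $X_n$ yields
$$
J(W_n)=J(W_{n-1})\coprod(J_{X_n}\setminus\{j_0\}).
$$
Feeding in the inductive hypothesis for $J(W_{n-1})$ produces~(\ref{eq:J-coprod}), with the new summand $J_{X_n}\setminus\{j_0\}$ arising as the rear-type idempotents.

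The crux of the limit statement is to describe how $J(W_n)$ sits inside $J(W_{n+1})$ for the projective system of Proposition~\ref{prop:lim}. By Proposition~\ref{prop:profin} the projection $W_{n+1}=W_n\lexprod X_{n+1}\to W_n$ induces a convolution-preserving injection $\Psi\colon A_{W_n}\hookrightarrow A_{W_{n+1}}$, and by Lemma~\ref{lem:image-A} this injection is $A\mapsto A\otimes\bJ_{X_{n+1}}$. The key computation I would carry out is to track a single primitive idempotent through $\Psi$: using the normalization in Proposition~\ref{prop:profin}, the convolution-primitive idempotent attached to $j\in J(W_n)$ is $\#W_n\cdot E_j$ (where $E_j$ is the matrix-product idempotent), and
$$
\Psi(\#W_n\cdot E_j)=\#W_n\cdot E_j\otimes\bJ_{X_{n+1}}=\#W_{n+1}\bigl(E_j\otimes\tfrac{1}{\#X_{n+1}}\bJ\bigr),
$$
which is exactly the normalization of the front-type primitive idempotent of Corollary~\ref{cor:lex-J}. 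Hence $\Psi$ identifies $J(W_n)$ with the front-type idempotents of $W_{n+1}$, the rear-type idempotents $J_{X_{n+1}}\setminus\{j_0\}$ being the new basis elements; equivalently the partial surjection $J(W_{n+1})\parto J(W_n)$ of Proposition~\ref{prop:profin} is the identity on the front-type part and undefined elsewhere.

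With these transition maps understood, the system $(J(W_n))_n$ becomes an increasing chain of inclusions $J(W_n)\hookrightarrow J(W_{n+1})$, each adjoining the fresh summand $J_{X_{n+1}}\setminus\{j_0\}$. Since $A_{\Xhat}=\varinjlim A_{W_n}$ has the direct limit of these idempotent sets as its basis, I would conclude that its primitive idempotents form the increasing union
$$
\Jhat=\bigcup_n J(W_n)=J_{X_1}\coprod\coprod_{i=2}^\infty(J_{X_i}\setminus\{j_0\}),
$$
which is~(\ref{eq:J-inf}) and agrees with $\varprojlim J_\lambda$ because the transition partial surjections are partial bijections onto their domains. The one delicate point, and the part I would be most careful with, is the convolution bookkeeping in the displayed computation: one must verify that $\Psi$ sends convolution-primitive idempotents to convolution-primitive idempotents and that the factors $\#W_n$ and $\#W_{n+1}=\#W_n\cdot\#X_{n+1}$ cancel to give precisely the front-type idempotent rather than a multiple of it. Everything else is routine manipulation of disjoint unions.
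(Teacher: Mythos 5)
Your proposal is correct and follows essentially the same route as the paper: induction via Corollary~\ref{cor:lex-J} (together with associativity) for the finite description, then Lemma~\ref{lem:image-A} to identify the transition map $E\mapsto E\otimes\bJ_{X_{n+1}}$ with the inclusion of $J(W_n)$ as the front-type idempotents, so that the projective system of partial surjections dualizes to an increasing union. Your explicit check that the normalization factors $\#W_n$ and $\#W_{n+1}$ conspire to land exactly on the convolution-primitive front-type idempotent is a detail the paper leaves implicit, but it is the same argument.
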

\begin{proof}
By induction using Corollary~\ref{cor:lex-J},
(\ref{eq:J-coprod}) is equal to
$J(X_1\lexprod X_2\lexprod X_3\cdots \lexprod X_n)$.
By Lemma~\ref{lem:image-A}
the one-to-many correspondence (\ref{enum:parto}) 
in Proposition~\ref{prop:profin}
\begin{equation}\label{eq:J-n}
J(X_1\lexprod X_2\lexprod X_3\lexprod \cdots \lexprod X_{n-1})
\to 
J(X_1\lexprod X_2\lexprod X_3\lexprod \cdots \lexprod X_{n})
\end{equation}
is given by 
$$E \mapsto E \otimes \bJ_{X_n},
$$
where the right-hand side is 
a primitive idempotent (w.r.t. $\bullet$) in 
$A_{X_1\lexprod X_2\lexprod X_3 \cdots \lexprod X_{n}}$
by Theorem~\ref{th:main}. This is
a natural inclusion of (\ref{eq:J-coprod}) for $n-1$
to that for $n$.
Thus, the partial surjection
(\ref{enum:parto}) in Proposition~\ref{prop:profin}, namely, 
$$
J(X_1\lexprod X_2\lexprod X_3\lexprod\cdots \lexprod X_{n})
\parto 
J(X_1\lexprod X_2\lexprod X_3\lexprod\cdots \lexprod X_{n-1})
$$
is induced by the natural inclusion (\ref{eq:J-n}).
The projective limit of the partial surjections is in this case equal
to the inductive limit of injections, hence is a union 
(\ref{eq:J-inf}).
\end{proof}
By Proposition~\ref{prop:kernel-prod}, the kernel schemes
in Definition~\ref{def:kernel} is a special case of
Proposition~\ref{prop:lim}, where each $I_i$ (and consequently $J_i$) 
has the cardinality two. They form a projective system,
where $I_n \to I_{n-1}$ is mapping $i\mapsto i$ for $i<n$, 
$n\mapsto \infty$, and $\infty \mapsto \infty$, 
as proved in Proposition~\ref{prop:lim}.  
$J_{n-1} \to J_n$ is a canonical inclusion,
as proved in Proposition~\ref{prop:ind-lim}.  
The above iterated wreath products give examples
of profinite association schemes whose $\Xhat$, $\Ihat$,
and $\Jhat$ are explicitly described. 
There is a closely related earlier research
by Barg and Skriganov \cite[Section~8]{BARG-SKRIGANOV},
where they treat similar objects coming from
a profinite abelian group, and obtain the duality theorems and
the structural constants.

Our final remark is about a relation with Kurihara-Okuda\cite{KURIHARA-OKUDA}.
There, for any compact Hausdorff group $G$ and its closed subgroup $H$,
the notion of Bose-Mesner algebra for the homogeneous 
space $G/H$ is given (which may be seen as an analogue to a Schurian scheme).

Any profinite group $G$ is compact and Hausdorff, and 
for any closed subgroup $H$, $G/H$ can be viewed as both
a homogeneous space (as in \cite{KURIHARA-OKUDA})
and a profinite association scheme as in \cite{MOO}.
Both methods yield the same Bose-Mesner algebra.
In this case, $G/H$ yields a projective system of
Schurian association schemes.
Theorem~\ref{th:schurian} and the iterated 
wreath products imply that there is a large class of 
projective systems of finite non-Schurian association schemes.
\section*{Acknowledgment}
We would like to show our best gratitude to Professor
Ilia Ponomarenko, who kindly informed us of appropriate references
related to the origin of the wreath products, and the 
adequate existing researches on the wreath products 
of coherent configurations (hence of association schemes),
in particular \cite{CHEN-PON} that
covers Section~\ref{sec:const-non-sch}
with more systematic and detailed discussions, after he found an earlier
version of this article in the arXiv.

\bibliographystyle{plain}
\bibliography{sfmt-kanren}


\end{document}